\newtheorem{theorem}{Theorem}[section]
\newtheorem{lemma}[theorem]{Lemma}
\newtheorem{proposition}[theorem]{Proposition}
\theoremstyle{definition}
\newtheorem{definition}[theorem]{Definition}
\newtheorem{example}[theorem]{Example}
\theoremstyle{remark}
\newtheorem{remark}[theorem]{Remark}
\newenvironment{definition-proposition}{\begin{def-prop} \em}{\end{def-prop}}
\def\P{\mathbb P_{\mathbb C}}
\def\H{\mathbb H}
\numberwithin{equation}{section}
\newcommand{\Flecha}[1]{\overset{\longleftrightarrow}{#1}}
\newcommand{\C}{\mathbb{C}}
\begin{document}

\title{  \sc{ The limit set of discrete subgroups of $PSL(3,\C)$ }}
\author{W. Barrera, A. Cano \& J. P. Navarrete }
\address{
Waldemar Barrera: Universidad Aut\'onoma de Yucat\'an Facultad de Matem\'aticas, Anillo Perif\'erico
Norte Tablaje Cat 13615 Chuburn\'a Hidalgo, M\'erida Yucat\'an. M\'exico.\\
Angel Cano:   Instituto de Matem\'atica Pura e Aplicada (IMPA), Rio de Janeiro, Brazil.\\
Juan Pablo Navarrete:  Universidad Aut\'onoma de Yucat\'an Facultad de Matem\'aticas, Anillo Perif\'erico
Norte Tablaje Cat 13615 Chuburn\'a Hidalgo, M\'erida Yucat\'an. M\'exico. }
\email{bvargas@uady.mx, angel@impa.br, jp.navarrete@uady.mx}

\thanks{Research partially supported by grants from CNPq}
\keywords{ kleinian groups,   projective complex plane,
discrete groups, limit set}

\subjclass{Primary: 32Q45, 37F45; Secondary 22E40, 57R30}



\maketitle
\begin{abstract}
If $\Gamma$ is a discrete subgroup of $PSL(3,\C)$, it is determined
the equicontinuity region $Eq(\Gamma)$ of the natural action of
$\Gamma$ on $\P^2$. It is also proved that the action restricted to
$Eq(\Gamma)$ is discontinuous, and $Eq(\Gamma)$ agrees with the
discontinuity set in the sense of Kulkarni whenever the limit set of
$\Gamma$ in the sense of Kulkarni, $\Lambda(\Gamma)$, contains at
least three lines in general position. Under some additional
hypothesis, it turns out to be the largest open set on which
$\Gamma$ acts discontinuously. Moreover, if $\Lambda(\Gamma)$
contains at least four complex lines and $\Gamma$ acts on $\P^2$
without fixed points nor invariant lines, then each connected
component of $Eq(\Gamma)$ is a holomorphy domain and a complete
Kobayashi hyperbolic space.
\end{abstract}

\section*{Introduction}\label{sec:1}

The study of the dynamics of discrete groups of automorphisms of
$\P^2$ is in its childhood, and one of the first basic problems is
understanding how the various possible notions of ``limit set"
relate among themselves. This is the main topic of this article.

If $\Gamma \subset PSL(2,\C)$ is a classical Kleinian group, then
its limit set $\Lambda$ is defined as the set of accumulation points
of the $\Gamma$ orbits in $\P^1 \cong S^2$. Well known arguments
show  that $\Lambda$ is independent of the choice of orbit, whenever
$\Gamma$ is non-elementary, and its complement $\Omega$ in $\P^1$ is
the largest discontinuity domain. Moreover, $\Omega$ coincides also
with the region of equicontinuity, admits  a complete metric and
volume which are invariant under the action of $\Gamma$, facts which
are essential for defining the Patterson-Sullivan measure for
Kleinian groups (see for instance \cite{Pat, Su}). When we go look
at discrete subgroups of $PSL(3, \mathbb C)$ acting on $\P^2$, there
are examples where the action on the complement of the set of
accumulation points of the orbits is neither discontinuous nor
equicontinuous. There are also examples (see \cite{Ku}) where there
is a largest discontinuity set but it does not agree with the
equicontinuity set. Hence it seems that there are different notions
of ``limit set" for discrete subgroups of $PSL(3,\C)$ acting on
$\Bbb{P}^2_\Bbb{C}$.

For discrete actions in general topological spaces there is a notion
of limit set due to Kulkarni, see \cite{Ku}, which has the important
property of granting  that the action on its complement is
discontinuous and in the case of conformal automorphisms of the
$n$-sphere $\Bbb{S}^n$ agrees with the usual notion of limit set. In
\cite{SV1, SV2} there are some interesting families of projective
groups and  their limit set in Kulkarni's sense. Later  in  the
articles  \cite {cano, Na1, Na2}, the authors provide  sufficient
conditions,  for particular subgroups $\Gamma$ of $PSL(3, \C)$, to
grant that the equicontinuity set and Kulkarni's discontinuity
region, $\Omega(\Gamma)$,  agree. It was also shown that, in such
particular case, $\Omega(\Gamma)$ is the largest discontinuity set.

One of the goals of this paper is the following:
\begin{theorem} \label{main1}
Let $\Gamma\subset PSL(3,\C)$  be a discrete group, then the
equicontinuity set of $\Gamma$ is a discontinuity region. Moreover,
if $ U$ is an open $\Gamma$ invariant subset with at least three
lines in general position lying on its complement, then $U$ is
contained in the equicontinuity set of $\Gamma$. Furthermore, if
Kulkarni's limit set $\Lambda(\Gamma)$ contains at least three lines
in general position, then Kulkarni's discontinuity region
$\Omega(\Gamma)= \P^2 \setminus \Lambda(\Gamma)$ is equal to the
equicontinuity set of the group $\Gamma$.
\end{theorem}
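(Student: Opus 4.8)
The plan is to base everything on the description of limits of sequences in $PSL(3,\C)$ as pseudo-projective maps. If $(\gamma_n)$ is a sequence of pairwise distinct elements of the discrete group $\Gamma$, then lifting to $SL(3,\C)$ and applying the singular value (Cartan) decomposition, after passing to a subsequence one obtains a pseudo-projective transformation $g=\lim_n\gamma_n$: the map $g$ is induced by a nonzero matrix of rank $1$ or $2$, it is defined and continuous on $\P^2\setminus\ker(g)$, and $\gamma_n\to g$ uniformly on compact subsets there. By duality of the singular values, either $\ker(g)$ is a point and the image a line (rank $2$), or $\ker(g)$ is a line and the image a single point (rank $1$); moreover the inverse sequence $(\gamma_n^{-1})$ has a limit $h$ whose indeterminacy set is the image of $g$ and whose image is $\ker(g)$. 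I shall use repeatedly that $Eq(\Gamma)$ is open and $\Gamma$-invariant, and that equicontinuity on an invariant open set $W$ means precisely that $\ker(g)\cap W=\varnothing$ for every limit $g$ of a sequence in $\Gamma$.

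For the first assertion I prove that $\Gamma$ acts properly discontinuously on $\Omega:=Eq(\Gamma)$. If not, some compact $K\subset\Omega$ satisfies $\gamma_nK\cap K\neq\varnothing$ for infinitely many distinct $\gamma_n$, so there are $x_n\in K$ with $\gamma_nx_n\in K$. As $\Omega$ is invariant, equicontinuity applies to both $(\gamma_n)$ and $(\gamma_n^{-1})$; passing to a subsequence, $\gamma_n\to g$ and $\gamma_n^{-1}\to h$ uniformly on compacta of $\Omega$, and continuity on $\Omega$ forces $\ker(g)\cap\Omega=\ker(h)\cap\Omega=\varnothing$. Writing $x_\infty=\lim x_n\in K$, we get $\gamma_nx_n\to g(x_\infty)$, and since $K$ is closed, $g(x_\infty)\in K\subset\Omega$. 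Now the image of $g$ is a line or a point; whenever $x\in\Omega$ and $g(x)\notin\ker(h)$ one has $x=\lim_n\gamma_n^{-1}(\gamma_nx)=h(g(x))$, which lies in the image of $h$, again a line or a point. The set of such $x$ is open in $\Omega$, so if it is nonempty it forces a two-dimensional open set into a line or a point, which is absurd. The only alternative is $g(\Omega)\subseteq\ker(h)$, whence $g(x_\infty)\in\ker(h)$, contradicting $g(x_\infty)\in\Omega$ and $\ker(h)\cap\Omega=\varnothing$. Either way we reach a contradiction, establishing proper discontinuity on $Eq(\Gamma)$.

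For the second assertion let $U$ be open, $\Gamma$-invariant, with three lines $\ell_1,\ell_2,\ell_3$ in general position inside $\P^2\setminus U$; I must show $\ker(g)\cap U=\varnothing$ for every limit $g$. Suppose instead $x_0\in U\cap\ker(g)$, and pass to the inverse limit $h$. If $\ker(g)$ is the point $x_0$ with image a line $L_g$, then $h$ is the constant $x_0$ off $L_g$; since at most one of the three lines can equal $L_g$, choose $w$ on one of the other two with $w\notin L_g$, so that $\gamma_n^{-1}(w)\to x_0$. If $\ker(g)$ is a line through $x_0$ with image a point, then $h$ has image $\ker(g)$ and a single indeterminacy point $r$, and the fibre $h^{-1}(x_0)$ is a projective line meeting each $\ell_i$; choosing $w\in\ell_i\cap h^{-1}(x_0)$ with $w\neq r$ (possible by general position) gives $\gamma_n^{-1}(w)\to x_0$. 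In both cases $w$ lies on a forbidden line, so $w\in\P^2\setminus U$, while $\gamma_n^{-1}(w)\to x_0\in U$ puts $\gamma_n^{-1}(w)\in U$ for large $n$; this contradicts the invariance of $U$. Hence every limit is continuous on $U$, the family $\Gamma$ is equicontinuous there, and $U\subseteq Eq(\Gamma)$.

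The third assertion now follows by two inclusions. Kulkarni's region $\Omega(\Gamma)=\P^2\setminus\Lambda(\Gamma)$ is open and $\Gamma$-invariant, and by hypothesis its complement $\Lambda(\Gamma)$ contains three lines in general position; applying the second assertion with $U=\Omega(\Gamma)$ yields $\Omega(\Gamma)\subseteq Eq(\Gamma)$. For the reverse inclusion I invoke the first assertion: $Eq(\Gamma)$ is an open, invariant set on which $\Gamma$ acts properly discontinuously. For such a set $W$ one checks that $W\cap\Lambda(\Gamma)=\varnothing$: proper discontinuity gives finite isotropy, so $W\cap L_0(\Gamma)=\varnothing$; and an orbit $\Gamma x$ accumulating at a point of $W$ must have $x\in W$ (by invariance an orbit starting in $W^c$ stays in $W^c$), whereupon proper discontinuity is contradicted, so $W$ meets neither $L_1(\Gamma)$ nor $L_2(\Gamma)$. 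Thus $Eq(\Gamma)\subseteq\Omega(\Gamma)$, and combined with the first inclusion, $\Omega(\Gamma)=Eq(\Gamma)$. I expect the main obstacle to be the second assertion, namely converting the collapse of the inverse maps into the clean production of a point $w$ on a forbidden line with $\gamma_n^{-1}(w)\to x_0$, which rests on the precise asymptotics of the singular value decomposition and on using general position to avoid the (at most one) exceptional line and the finitely many exceptional points.
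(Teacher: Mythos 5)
Your overall strategy is the paper's own: compactify $PSL(3,\C)$ by pseudo-projective maps and reduce each assertion to locating the kernels of limits of sequences in $\Gamma$ (these are the paper's Theorems \ref{gamactsdisconeq}, \ref{dissubeq} and \ref{eqeqdis}). Your first assertion is sound, but there is a genuine gap in the second one, precisely in the case you yourself flag as the main obstacle: when $\ker(g)$ is a line. You assert that the limit $h$ of $(\gamma_n^{-1})$ then has image $\ker(g)$ and a single indeterminacy point $r$, i.e.\ that $h$ has rank $2$. That is false in general: when $g$ has rank $1$ the adjugate of its lift vanishes, the limit of the inverses is not determined by $g$, and $h$ may itself have rank $1$. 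For example, for the powers $\gamma_n=\mathrm{diag}(4^n,1,4^{-n})$ of a strongly loxodromic element one gets $g=[\mathrm{diag}(1,0,0)]$ with $\ker(g)=\Flecha{e_2,e_3}$, while $\gamma_n^{-1}\to h=[\mathrm{diag}(0,0,1)]$, whose image is the single point $[e_3]$. In that situation $h^{-1}(x_0)$ is empty for every $x_0\in\ker(g)$ other than $Im(h)$, your fixed point $w$ on a forbidden line only yields $\gamma_n^{-1}(w)\to Im(h)$, and you conclude merely that the one point $Im(h)$ lies in $\P^2\setminus U$, not that the whole line $\ker(g)$ does. The paper closes exactly this case by passing to the dual action on $(\P^2)^*$ (Lemma \ref{delasucesion}, part 3): if $\ell$ is a line not through the point $Im(g)$ --- and at least one of the three lines in general position qualifies, since they have no common point --- then the transpose-inverse computation on line coordinates shows that the lines $\gamma_n^{-1}(\ell)$ converge, as subsets of $\P^2$, to the entire line $\ker(g)$; since each $\gamma_n^{-1}(\ell)$ lies in the closed set $\P^2\setminus U$, so does all of $\ker(g)$. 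The point is that this is convergence of moving points $\gamma_n^{-1}(w_n)$ with $w_n\in\ell$, strictly more than pointwise convergence at a fixed $w$, and it cannot be extracted from $h$ alone.

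Two smaller issues. First, you use repeatedly, without proof, the equivalence ``equicontinuous on $W$ iff $\ker(g)\cap W=\varnothing$ for every limit $g$''; the nontrivial direction (a point of $\ker(g)$ is never an equicontinuity point) is the paper's Lemma \ref{delasuc2}, and its proof in the case where $\ker(g)$ is a line requires a separate argument. Second, in the third assertion your dismissal of $L_2(\Gamma)$ is too quick: $L_2$ is defined via compact sets $K\subset\P^2\setminus(L_0(\Gamma)\cup L_1(\Gamma))$ which need not be contained in $W=Eq(\Gamma)$, so ``an orbit starting in $W^c$ stays in $W^c$'' does not handle sequences $k_n\in K\cap W$ accumulating on $\partial W$. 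The paper circumvents this with Lemma \ref{cuasiminimalidad}, applied to the closed set $C=\P^2\setminus Eq(\Gamma)$, together with the uniform convergence $\gamma_n\to S$ near the limit point $k$ to replace the cluster point of $\{\gamma(K)\}$ by a cluster point of the single orbit $\Gamma k$, which lands it in $L_1(\Gamma)$.
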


In section \ref{psprojective},we split the Theorem \ref{main1} in
three parts: Theorem \ref{gamactsdisconeq}, Theorem \ref{dissubeq}
and Theorem \ref{eqeqdis}. The proofs of these theorems rely on
{\it pseudo-projective} maps, which provide a compactification of
the non-compact Lie group $PSL(3,\C)$.


Kulkarni's limit set $\Lambda(\Gamma)$, for a  subgroup $\Gamma$ of
$PSL(3, \C)$ acting on $\P^2$, may seem a complicated object at
first sight. However, when $\Gamma =\langle \alpha \rangle$ is a
cyclic group, generated by the element $\alpha \in PSL(3, \C)$, then
the limit set of this cyclic group, denoted $\Lambda(\alpha)$, can
be found (see \cite{Na2}) and it is  one of the following (according
to the Jordan canonical form of $\alpha$):
\begin{itemize}
\item The empty set,
\item all of $\P^2$,
\item one single complex line,
\item one complex line and one point,
\item two distinct complex lines.
\end{itemize}
In section \ref{lambdaunlin}, we define the set
$C(\Gamma)=\overline{\cup _{\gamma \in \Gamma} \Lambda(\gamma)}$ and
we prove the following:

\begin{theorem}\label{main2}
Let $\Gamma\subset PSL(3,\Bbb{C})$ be a discrete group, if   the
number of complex lines in general position in $\Lambda(\Gamma)$ and
$C(\Gamma)$ is at least three, then
\[
\Lambda(\Gamma)=C(\Gamma)=\overline{\bigcup_{\gamma\in \Gamma}
\Lambda(\gamma)},
\]
and   $\Lambda(\Gamma)$ is the union of complex lines. Moreover the
discontinuity region according to Kulkarni, $\Omega(\Gamma)$, agrees
with the equicontinuity set. Furthermore, when  $\Gamma$ acts on
$\P^2$ without global fixed points, then $\Omega(\Gamma)$ is the
largest open set where $\Gamma$ acts properly and discontinuously.
\end{theorem}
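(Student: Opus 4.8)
The plan is to treat the four assertions in turn, leaning on Theorem~\ref{main1} for the identification of the equicontinuity and Kulkarni regions and on the cyclic classification recalled above for the geometry of $C(\Gamma)$. First I would establish the chain of equalities. The equality $C(\Gamma)=\overline{\bigcup_{\gamma\in\Gamma}\Lambda(\gamma)}$ is the definition, so the content is $\Lambda(\Gamma)=C(\Gamma)$. For one inclusion I would use the monotonicity and equivariance of Kulkarni's limit set: since $\langle\gamma\rangle\subset\Gamma$ we have $\Lambda(\gamma)\subset\Lambda(\Gamma)$ for every $\gamma$, and as $\Lambda(\Gamma)$ is closed this gives $C(\Gamma)\subset\Lambda(\Gamma)$. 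For the reverse inclusion, note that $g\Lambda(\gamma)=\Lambda(g\gamma g^{-1})$ shows that $\bigcup_\gamma\Lambda(\gamma)$, hence $C(\Gamma)$, is $\Gamma$-invariant and closed. Therefore $U=\P^2\setminus C(\Gamma)$ is an open $\Gamma$-invariant set whose complement, by hypothesis, carries three lines in general position; Theorem~\ref{main1} then forces $U\subset Eq(\Gamma)=\P^2\setminus\Lambda(\Gamma)$, and taking complements yields $\Lambda(\Gamma)\subset C(\Gamma)$. This proves $\Lambda(\Gamma)=C(\Gamma)=\overline{\bigcup_\gamma\Lambda(\gamma)}$, and since $\Lambda(\Gamma)$ contains three lines in general position the same Theorem gives at once $\Omega(\Gamma)=\P^2\setminus\Lambda(\Gamma)=Eq(\Gamma)$.

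Next I would show that $\Lambda(\Gamma)$ is a union of complex lines. By the cyclic classification each $\Lambda(\gamma)$ is a union of at most two lines together with at most one extra point, so $\bigcup_\gamma\Lambda(\gamma)=\mathcal L\cup\mathcal P$, where $\mathcal L$ is the union of all lines that occur and $\mathcal P$ is the set of isolated points; both are $\Gamma$-invariant, and $C(\Gamma)=\overline{\mathcal L}\cup\overline{\mathcal P}$. I claim $\overline{\mathcal L}$ is a union of lines: if $p\in\overline{\mathcal L}$ then $p=\lim p_k$ with $p_k$ lying on lines $\ell_k\in\mathcal L$, and compactness of the dual plane $\check{\P}^2$ lets me pass to $\ell_k\to\ell$; closedness of the incidence relation gives $p\in\ell$, while every point of $\ell$ is a limit of points of the $\ell_k$, so $\ell\subset\overline{\mathcal L}$. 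It remains to absorb $\mathcal P$. Each $q\in\mathcal P$ is a fixed point of some $\gamma$ whose limit set is ``one line and one point''; choosing a line $\ell_0\in\mathcal L$ in general position with respect to the fixed points of $\gamma$ (the three lines in general position guarantee enough lines are available), the iterates $\gamma^{\pm n}\ell_0$ lie in $\mathcal L$ and converge to a line through $q$, which therefore lies in $\overline{\mathcal L}$. Hence $C(\Gamma)=\overline{\mathcal L}$ is a union of complex lines, and so is $\Lambda(\Gamma)$.

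Finally, the maximality statement, which I expect to be the main obstacle. Assume $\Gamma$ has no global fixed point, let $W$ be open with $\Gamma$ acting properly discontinuously on it, and suppose for contradiction that $W\cap\Lambda(\Gamma)\neq\emptyset$. Since $\Lambda(\Gamma)$ is a union of lines, $W$ meets some line $L\subset\Lambda(\Gamma)$ at a point $p\notin Eq(\Gamma)$. I would then extract, via the pseudo-projective compactification used for Theorem~\ref{main1}, a sequence $\gamma_n\to\tau$ with $\tau$ a non-invertible pseudo-projective map. If $p\notin\ker\tau$, then $\gamma_n p\to\tau(p)$, and taking a compact set $K\subset W$ containing both $p$ and $\tau(p)$ produces infinitely many distinct $\gamma_n$ with $\gamma_n K\cap K\neq\emptyset$, contradicting proper discontinuity. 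The delicate case is $p\in\ker\tau$: here the images of points near $p$ may disperse, and it is precisely the absence of a global fixed point that prevents all such accumulation from escaping $W$, allowing one to relocate the orbit accumulation inside $W$ and again contradict proper discontinuity. Once $W\subset\Omega(\Gamma)$ is established, Kulkarni's theorem that $\Gamma$ acts properly discontinuously on $\Omega(\Gamma)$ shows this region is the largest such open set.
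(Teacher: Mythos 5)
Your derivation of $\Lambda(\Gamma)\subset C(\Gamma)$ and of $\Omega(\Gamma)=Eq(\Gamma)$ matches the paper's route (apply Theorems \ref{dissubeq} and \ref{eqeqdis} to the invariant open set $\P^2\setminus C(\Gamma)$), and your ``union of lines'' argument is essentially the paper's. But there are two genuine gaps.

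First, the reverse inclusion $C(\Gamma)\subset\Lambda(\Gamma)$ does not follow from ``monotonicity'' of Kulkarni's limit set. Only $L_0$ is monotone under passage to subgroups: $L_1$ and $L_2$ are defined relative to the complements of the previously constructed strata, and since $L_0(\Gamma)\cup L_1(\Gamma)$ can be strictly larger than $L_0(\langle\gamma\rangle)\cup L_1(\langle\gamma\rangle)$, the compact sets and orbits entering the definition of $L_1(\langle\gamma\rangle)$ and $L_2(\langle\gamma\rangle)$ need not be admissible in the computation of $\Lambda(\Gamma)$, so $\Lambda(\gamma)\subset\Lambda(\Gamma)$ is not automatic. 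This inclusion is precisely the content of Lemma \ref{CsubLambda}, which is the technical core of the theorem: it requires the hypothesis that $\Lambda(\Gamma)$ contains three lines in general position and a case-by-case dynamical analysis over the Jordan forms of $\gamma$ (for each parabolic and loxodromic type one exhibits explicit sequences $a_n\to p\in\Lambda(\gamma)$ with $\gamma^n(a_n)$ converging inside a putative $\Omega(\Gamma)$, or pushes an auxiliary line of $\Lambda(\Gamma)$ onto the lines of $\Lambda(\gamma)$). Your one-line justification skips all of this and, as stated, is false.

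Second, the maximality argument is not carried out. In your ``easy'' case there is no reason for $\tau(p)$ to lie in $W$, so the compact set $K$ you propose need not exist; and in the ``delicate'' case $p\in\ker\tau$ you only assert that the absence of a fixed point ``allows one to relocate the orbit accumulation inside $W$,'' which is not an argument. The paper proceeds differently and more simply: by Theorem \ref{polloteo}, for any open $U$ on which $\Gamma$ acts properly discontinuously, the complement $\P^2\setminus U$ must contain a whole line from $\Lambda(\gamma_0)$ for suitable infinite-order elements $\gamma_0$; combining this with the absence of a global fixed point (a short case analysis on whether the witnesses $\gamma_1,\gamma_2,\gamma_3$ are distinct) one produces three lines in general position inside $\P^2\setminus U$, and then Theorems \ref{dissubeq} and \ref{eqeqdis} give $U\subset Eq(\Gamma)=\Omega(\Gamma)$ directly. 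You should replace your compactness/contradiction sketch by this line-counting argument, or else supply the missing analysis in the $p\in\ker\tau$ case.
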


If $\Gamma \subset PU(2,1)$ is a discrete subgroup, then $\Gamma$
acts on the complex hyperbolic plane $\H ^2_\C$ by isometries. In
this case, the limit set according to Chen-Greenberg of $\Gamma$ is
obtained as the set of accumulation points, in $\partial \H ^2 _\C$,
of the $\Gamma$-orbit of any point in $\H ^2 _\C$. Also, it can be
obtained as the closure of the set of fixed points of non-elliptic
elements in $\Gamma$. When we consider the action of $\Gamma$ in
$\P^2$, the Kulkarni's limit set $\Lambda(\Gamma)$ is given in the
following way (see \cite{Na1}): Let $F_{\Gamma}$ be the set of
points $p \in
\partial \H^2 _\C$ such that there exists a non-elliptic element
$\gamma \in \Gamma$ that satisfies  $\gamma p=p$. In what follows,
$(\P^2)^*$, denotes the space of complex lines in $\P^2$. If
$\mathcal{E}(\Gamma)$ denotes the subset of $(\P^2)^*$ consisting of
those complex lines tangent to $\partial \H ^2 _\C$ at points of
$F_{\Gamma}$, then
$$\Lambda(\Gamma) = \bigcup _{l \in \overline{\mathcal{E}(\Gamma)}} l=
\overline{\bigcup_{l \in \mathcal{E}(\Gamma)} l}.$$

Moreover, when $l\in \overline{\mathcal{E}(\Gamma)}$, the closure of
the $\Gamma$-orbit of the complex line  $l$ in  $(\P^2)^*$ is equal
to $\overline{\mathcal{E}(\Gamma)}$. In other words, the action of
$\Gamma$ in the set of lines $\overline{\mathcal{E}(\Gamma)}$ is
minimal.

Let $\Gamma \subset PU(2,1)$ be a discrete subgroup such that
$\Lambda(\Gamma)=\P^2 \setminus  \H ^2 _\C$, then there exist
complex lines contained in $\Lambda(\Gamma)$ not belonging to
$\overline{\mathcal{E}(\Gamma)}$, so that $\mathcal{E}(\Gamma)$
gives us a canonical choice of lines to describe the limit set
$\Lambda(\Gamma)$ as a union of lines. In section \ref{lambdaunlin},
we prove the following generalization of the main theorems in
\cite{Na1, BN}:

\begin{theorem} \label{main3}
Let  $\Gamma \subset PSL(3, \C)$ be an infinite discrete subgroup,
without fixed points nor invariant lines. Let $\mathcal{E}(\Gamma)$
be the subset of $(\P^2 )^*$ consisting of all the complex lines $l$
for which there exists an element $\gamma \in \Gamma$ such that $ l
\subset \Lambda(\gamma)$.
\begin{enumerate}
\item[a)] $Eq(\Gamma)= \Omega (\Gamma)$, is the maximal open set on which
$\Gamma$ acts properly and discontinuously. Moreover, if
$\mathcal{E}(\Gamma)$ contains more than three complex lines then
every connected component of $\Omega(\Gamma)$ is complete Kobayashi
hyperbolic (compare with \cite{BN}).

\item[b)] The set
$$\Lambda(\Gamma)= \overline{\bigcup _{ l \in \mathcal{E}(\Gamma)} l}
=\bigcup _{ l \in \overline{\mathcal{E}(\Gamma)}} l =
\overline{\bigcup _{\gamma \in \Gamma} \Lambda(\gamma)}$$ is
path-connected.

\item[c)]If $\mathcal{E}(\Gamma)$ contains more than three lines,
then $\overline{\mathcal{E}(\Gamma)} \subset (\P ^2)^*$ is a perfect
set. Also, it is the minimal closed $\Gamma$-invariant subset of
$(\P^2 )^*$.
\end{enumerate}
\end{theorem}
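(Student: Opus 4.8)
The plan is to reduce the entire statement to Theorems \ref{main1} and \ref{main2}, whose hypotheses require three complex lines in general position in $\Lambda(\Gamma)$ and $C(\Gamma)$; so the first and decisive task is to prove that, under the standing assumptions, $\mathcal{E}(\Gamma)$ already contains three lines in general position. I would start from the observation that $\mathcal{E}(\Gamma)$, and hence $\overline{\mathcal{E}(\Gamma)}$, is $\Gamma$-invariant: since the Kulkarni limit set is natural under conjugation, $\gamma\,\Lambda(\alpha)=\Lambda(\gamma\alpha\gamma^{-1})$, so $l\subset\Lambda(\alpha)$ forces $\gamma l\subset\Lambda(\gamma\alpha\gamma^{-1})$ and thus $\gamma l\in\mathcal{E}(\Gamma)$. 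I would then run a dichotomy on the configuration of $\mathcal{E}(\Gamma)$ viewed inside $(\P^2)^*$. Passing to the dual, ``three lines in general position'' is equivalent to ``$\mathcal{E}(\Gamma)$ is not contained in a line of $(\P^2)^*$'', i.e.\ the lines are not all concurrent. If $\mathcal{E}(\Gamma)=\emptyset$, then every infinite-order element of $\Gamma$ would be elliptic; ruling this out (an infinite discrete group with no common fixed point contains a non-elliptic element, via the compactness of pseudo-projective limits and the classification of $\Lambda(\gamma)$) produces a non-elliptic $\gamma$, whose $\Lambda(\gamma)$ contains a line. If $\mathcal{E}(\Gamma)$ were a single line, invariance would make that line $\Gamma$-invariant; and if all its lines were concurrent at a point $p$, then $p$, being the unique vertex of the pencil, would be a global fixed point. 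Both contradict the hypotheses, leaving exactly the desired conclusion.

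With three lines in general position established in $\mathcal{E}(\Gamma)\subset C(\Gamma)$, Theorem \ref{main2} applies directly and yields $\Lambda(\Gamma)=C(\Gamma)=\overline{\bigcup_{\gamma}\Lambda(\gamma)}$ as a union of complex lines, the equality $\Omega(\Gamma)=Eq(\Gamma)$, and—because $\Gamma$ has no global fixed point—the maximality of $\Omega(\Gamma)$ among open sets on which $\Gamma$ acts properly discontinuously; this is the whole first assertion of (a). For the remaining equalities in (b) I would identify $\overline{\bigcup_{l\in\mathcal{E}(\Gamma)}l}$ with $\bigcup_{l\in\overline{\mathcal{E}(\Gamma)}}l$ by a properness argument on the incidence variety $\{(x,l):x\in l\}\subset\P^2\times(\P^2)^*$, which is compact: the image in $\P^2$ of the incidence lying over $\overline{\mathcal{E}(\Gamma)}$ is closed and equals the closure of the image over $\mathcal{E}(\Gamma)$. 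The difference between $\bigcup_{\gamma}\Lambda(\gamma)$ and $\bigcup_{l\in\mathcal{E}(\Gamma)}l$ is at most the isolated points from the ``line $+$ point'' case, which are absorbed on taking closures, closing the chain of equalities. Path-connectedness is then immediate: $\Lambda(\Gamma)$ is a nonempty union of complex lines, each line is a path-connected copy of $\P^1$, and any two distinct lines in $\P^2$ meet, so this is a union of pairwise-intersecting path-connected sets, hence path-connected.

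For (c) and the hyperbolicity part of (a) I would exploit minimality. To show $\overline{\mathcal{E}(\Gamma)}$ is the minimal closed $\Gamma$-invariant subset of $(\P^2)^*$, I would prove that for every $l\in\overline{\mathcal{E}(\Gamma)}$ the closure of its $\Gamma$-orbit is all of $\overline{\mathcal{E}(\Gamma)}$: a divergent sequence $\gamma_n\to\infty$ converges, after a subsequence, to a pseudo-projective map whose image is a line of $\mathcal{E}(\Gamma)$, so any line is pushed onto the attracting configuration. Minimality together with infiniteness then forces $\overline{\mathcal{E}(\Gamma)}$ to be perfect when $\mathcal{E}(\Gamma)$ has more than three lines, since its (necessarily $\Gamma$-invariant, proper, closed) set of isolated points must be empty. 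Finally, in that case $\overline{\mathcal{E}(\Gamma)}$ is infinite and not contained in a pencil, so $\Lambda(\Gamma)$ omits infinitely many lines not all passing through finitely many points; each connected component of $\Omega(\Gamma)$ therefore omits enough lines in sufficiently general position to be complete Kobayashi hyperbolic by the criterion used in \cite{BN}.

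The hard part will be the structural Step~1—showing that the no-common-fixed-point and no-invariant-line hypotheses genuinely force three lines in general position. The delicate subcase is when $\mathcal{E}(\Gamma)$ splits as two pencils through points $p$ and $q$: invariance then only guarantees that $\Gamma$ preserves the pair $\{p,q\}$, so a finite-index subgroup fixes $p$, $q$ and the line $\overline{pq}$, and one must argue back to a contradiction for $\Gamma$ itself. Controlling these near-degenerate configurations through the classification of the cyclic limit sets $\Lambda(\gamma)$ and the dynamics of pseudo-projective limits is where the real work lies; everything after that is an application of Theorems \ref{main1}, \ref{main2} and of standard compactness and minimality arguments.
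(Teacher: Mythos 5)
Your reduction of (a) and (b) to Theorems \ref{main1} and \ref{main2} is sound and is essentially the paper's own route: your dual dichotomy (empty / a single line / a single pencil / three lines in general position) is Lemma \ref{3lineslema} rephrased, since any three distinct lines that are not concurrent are automatically in general position. For the same reason the ``two pencils through $p$ and $q$'' subcase you single out as the hard part is a non-issue: two genuinely distinct pencils already contain three non-concurrent lines, so the only degenerate configurations are exactly the three you dispose of, and no finite-index argument is needed. One small debt remains in (b): the isolated point $p$ of a cyclic limit set of the form $l\cup\{p\}$ is \emph{not} ``absorbed on taking closures'' for free --- the closure of $\bigcup_{l\in\mathcal{E}(\Gamma)}l$ contains $p$ only if $p$ is an accumulation point of that union, which the paper proves dynamically (some sequence $\gamma^{\pm n}(l_1)$, $l_1\in\mathcal{E}(\Gamma)$, converges to a line through the fixed point $p$; see the Remark following Lemma \ref{lineatractora}).

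The genuine gap is in (c). Showing that every $\Gamma$-orbit in $\overline{\mathcal{E}(\Gamma)}$ is dense in $\overline{\mathcal{E}(\Gamma)}$ proves that $\overline{\mathcal{E}(\Gamma)}$ is \emph{a} minimal set; the theorem asserts it is \emph{the} minimal closed $\Gamma$-invariant subset of $(\P^2)^*$, i.e.\ contained in every nonempty closed invariant $\mathcal{D}$, and a priori a second minimal set disjoint from $\overline{\mathcal{E}(\Gamma)}$ could exist. The paper's argument necessarily starts from an \emph{arbitrary} such $\mathcal{D}$: it extracts from the orbit of any line of $\mathcal{D}$ four lines in general position (the argument of Lemma \ref{4lineslema}), and then applies Lemma \ref{lineatractora} --- for each $\gamma$ and each line $l_1\subset\Lambda(\gamma)$, one of those four lines lies outside the exceptional pencil, so its forward or backward $\gamma$-orbit converges to $l_1$, whence $l_1\in\mathcal{D}$ and $\overline{\mathcal{E}(\Gamma)}\subset\mathcal{D}$. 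Moreover, your proposed mechanism for density (``a pseudo-projective limit whose image is a line of $\mathcal{E}(\Gamma)$, so any line is pushed onto the attracting configuration'') is unsubstantiated as written: it presupposes both that such limit lines lie in $\overline{\mathcal{E}(\Gamma)}$ and that they sweep out a dense set of targets, which is essentially what is to be proved; the paper replaces this with the explicit, case-by-case Lemma \ref{lineatractora} on Jordan forms, which also drives its direct proof of perfectness (Lemma \ref{Esperfecto}). Your alternative derivation of perfectness from minimality (the derived set is closed, invariant and nonempty since $\overline{\mathcal{E}(\Gamma)}$ is infinite and compact) would be fine, but only once minimality is actually established.
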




\section{Preliminaries and Notation.}

\subsection{Projective Geometry}
We recall that the complex projective plane
$\mathbb{P}^2_{\mathbb{C}}$ is defined as the quotient space
$$(\mathbb{C}^{3}\setminus  \{(0,0,0)\})/\mathbb{C}^*,$$
 where $\mathbb{C}^*= \C \setminus \{0\}$ acts on $\mathbb{C}^3\setminus \{(0,0,0)\}$ by the usual scalar
multiplication. This is a complex $2$-dimensional Riemannian
manifold which is compact, connected and is naturally equipped with
the Fubini-Study metric. Let $[\mbox{ }]:\mathbb{C}^{3}\setminus
\{(0,0,0)\}\rightarrow \mathbb{P}^{2}_{\mathbb{C}}$ be the quotient
map. If $\beta=\{e_1,e_2,e_3\}$ is the standard basis of
$\mathbb{C}^3$, we write $[e_j]=e_j$ and if $w=(w_1,w_2,w_3)\in
\mathbb{C}^3\setminus \{(0,0,0)\}$ then we write
$[w]=[w_1:w_2:w_3]$. Also, the set $l\subset
\mathbb{P}^2_{\mathbb{C}}$ is said to be a complex line if
$[l]^{-1}\cup \{(0,0,0)\}$ is a complex linear subspace of dimension
$2$. Given  two distinct points $p,q\in \mathbb{P}^2_{\mathbb{C}}$,
there is a unique complex line passing through $p$ and $q$, such
line is denoted by $\Flecha{p,q}$.

Any 2-dimensional complex vector subspace of $\C ^3$ can be
expressed as the set of points $(z_1, z_2, z_3)$ satisfying an
equation of the form $A z_1+ B z_2 + C z_3 =0$. Conversely, any
element $(A, B, C) \in \C^3 \setminus \{(0,0,0)\}$ defines, up to a
non-zero scalar multiple, the 2-dimensional complex vector subspace
of $\C ^3$ consisting of the points $(z_1, z_2, z_3)$ such that $A
z_1+ B z_2 + C z_3 =0$. In this way, the space of complex lines in
$\P^2$, denoted by $(\P^2)^*$, can be identified with $\P ^2$.

Consider the action of $\mathbb{Z}_{3}$ (the cubic roots of the
unity) on $SL(3,\mathbb{C})$ given by the usual scalar
multiplication, then
$PSL(3,\mathbb{C})=SL(3,\mathbb{C})/\mathbb{Z}_{3}$ is a Lie group
whose elements are called projective transformations.  Let $[\mbox{
}]:SL(3,\mathbb{C})\rightarrow PSL(3,\mathbb{C})$ be   the quotient
map, we say that $\tilde\gamma \in GL(3,\mathbb{C})$ is a lift of
$\gamma \in PSL(3,\mathbb{C})$ if there is a cubic root $\tau$ of
$(Det(\tilde{\gamma}))^{-1}$ such that   $[\tau
\widetilde\gamma]=\gamma$, also, we will use the notation
$(\gamma_{ij})$ to denote elements  in $SL(3,\Bbb{C})$. One can show
that $ PSL(3, \mathbb{C})$ is a Lie group  that acts  transitively,
effectively and by biholomorphisms on $\mathbb{P}^2_{\mathbb{C}}$ by
$[\gamma]([w])=[\gamma(w)]$, where $w\in \mathbb{C}^3\setminus
\{(0,0,0)\}$ and    $\gamma\in SL(3,\mathbb{C})$. Also it is well
known that projective transformations take complex lines into
complex lines. Moreover, if the complex line $l \in (\P^2)^*$ is
represented by $[A: B: C]$  and $\gamma =[\tilde{\gamma}] \in PSL(3,
\C)$, then $\gamma (l) \in (\P^2)$ is represented by $[(A, B, C)
\tilde{\gamma}^{-1}]$.

\subsection{Kulkarni's Limit Set for Subgroups of $PSL(3,\C)$}
The Kulkarni's limit set is defined for actions of groups on very
general topological spaces (see \cite{Ku}), but we restrict our
attention to subgroups of $PSL(3,\C)$ acting on $\P^2$.
 Let $\Gamma\subset   PSL(3,\mathbb{C})$ be a subgroup:

\begin{enumerate}
\item $L_0(\Gamma)$ is defined as the closure  of  the points in
$\mathbb{P}^2_{\mathbb{C}}$ with infinite isotropy group.

\item $L_1(\Gamma)$ is the closure of the set  of cluster points  of
the $\Gamma$-orbit of the point $z$,  where  $z$ runs  over
$\mathbb{P}^2_{\mathbb{C}}\setminus  L_0(\Gamma)$.

Recall that $q$ is a cluster point  for  the family of sets
$\{\gamma (K)\}_{ \gamma \in \Gamma}$, where $K \ne \varnothing$ is
a subset of $\mathbb{P}^2_{\mathbb{C}}$, if there is a sequence
$(k_m)_{m\in\mathbb{N}}\subset K$ and a sequence of distinct
elements $(\gamma_m)_{m\in\mathbb{N}}\subset \Gamma$ such that
$\gamma_m(k_m)\xymatrix{ \ar[r]_{m \rightarrow  \infty}&} q$.

\item  $L_2(\Gamma)$ is  the closure of cluster  points of the
family of compact sets $\{\gamma (K)\}_{ \gamma \in \Gamma}$,  where
$K$ runs  over all the compact subsets of
$\mathbb{P}^2_{\mathbb{C}}\setminus (L_0(\Gamma) \cup L_1(\Gamma))$.

\item The  \textit{Limit Set in the sense of Kulkarni} for $\Gamma$  is
defined as:  $$\Lambda (\Gamma) = L_0(\Gamma) \cup L_1(\Gamma) \cup
L_2(\Gamma).$$

\item The \textit{Discontinuity Region in the
sense of Kulkarni} of $\Gamma$ is defined as:
$$\Omega(\Gamma) = \mathbb{P}^2_{\mathbb{C}}\setminus
\Lambda(\Gamma).$$
\end{enumerate}

We say  that $\Gamma$ is a \textit{Complex  Kleinian Group} if
$\Omega(\Gamma)\neq \emptyset$, see \cite{SV1}. The following
proposition is obtained from \cite{Ku}.


\begin{proposition}
Let   $\Gamma\subset PSL(3,\Bbb{C})$  be a complex Kleinian group.
Then:

\begin{enumerate}
\item \label{i:pk1} $\Gamma$ is discrete and countable.

\item\label{i:pk2}
$\Lambda(\Gamma),\,L_0(\Gamma),\,L_1(\Gamma),\,L_2(\Gamma)$ are
closed $\Gamma$-invariant sets.

\item \label{i:pk3} $\Gamma$ acts
discontinuously on  $\Omega(\Gamma)$.
\end{enumerate}
\end{proposition}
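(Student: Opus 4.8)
The plan is to handle the three assertions in order of increasing substance: the closedness/invariance statement is essentially formal, the discreteness statement follows by a short contrapositive argument, and the discontinuity statement is built directly into the definition of $L_2(\Gamma)$. Throughout I would use only that $\Gamma$ acts on $\P^2$ by homeomorphisms (indeed biholomorphisms) and the three defining closures.

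For the closedness and invariance claim, closedness is immediate since $L_0(\Gamma), L_1(\Gamma), L_2(\Gamma)$ are defined as closures and $\Lambda(\Gamma)$ is their finite union. For invariance I would first verify that the ``raw'' sets, before taking closures, are invariant, and then use that a homeomorphism carries the closure of an invariant set to the closure of its image. The set of points with infinite isotropy is invariant because $\Gamma_{\gamma p}=\gamma\Gamma_p\gamma^{-1}$, so $L_0(\Gamma)$ is invariant. If $q$ is a cluster point of the orbit of some $z\notin L_0(\Gamma)$, witnessed by pairwise distinct $\gamma_m$ with $\gamma_m z\to q$, then for any $\delta\in\Gamma$ the elements $\delta\gamma_m$ are again pairwise distinct and $\delta\gamma_m z\to\delta q$, so $\delta q$ is a cluster point of the orbit of the \emph{same} admissible point $z$; hence $L_1(\Gamma)$ is invariant. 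The identical manipulation $\gamma_m k_m\to q\Rightarrow\delta\gamma_m k_m\to\delta q$, together with the fact that $L_0(\Gamma)\cup L_1(\Gamma)$ is already invariant (so translated compacta remain admissible), yields invariance of $L_2(\Gamma)$, and therefore of $\Lambda(\Gamma)$ and of $\Omega(\Gamma)$.

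For discreteness and countability I would argue by contraposition. If $\Gamma$ is not discrete, then the identity is not isolated in $\Gamma$, so there is a sequence of pairwise distinct elements $\eta_m\in\Gamma$ with $\eta_m\to\mathrm{id}$. Then for every $p\in\P^2$ we have $\eta_m(p)\to p$, so any $p\notin L_0(\Gamma)$ is a cluster point of its own orbit and thus lies in $L_1(\Gamma)$. Hence $\P^2=L_0(\Gamma)\cup L_1(\Gamma)\subset\Lambda(\Gamma)$, giving $\Omega(\Gamma)=\varnothing$ and contradicting the Kleinian hypothesis. Countability is then automatic, since $PSL(3,\C)$ is a second countable space and every discrete subset of such a space is countable.

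For the discontinuity assertion, the point is that $L_2(\Gamma)$ is tailored precisely to absorb the cluster points of compact pieces of $\P^2\setminus(L_0(\Gamma)\cup L_1(\Gamma))$. I would establish proper discontinuity directly: given a compact $K\subset\Omega(\Gamma)$, suppose for contradiction that infinitely many distinct $\gamma_m\in\Gamma$ satisfy $\gamma_m K\cap K\neq\varnothing$, and pick $k_m\in K$ with $\gamma_m k_m\in K$. By compactness we may assume $\gamma_m k_m\to q\in K$; since $K$ is a compact subset of $\P^2\setminus(L_0(\Gamma)\cup L_1(\Gamma))$, the point $q$ is by definition a cluster point of $\{\gamma(K)\}_{\gamma\in\Gamma}$, so $q\in L_2(\Gamma)\subset\Lambda(\Gamma)$, contradicting $q\in K\subset\Omega(\Gamma)$. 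Thus $\{\gamma\in\Gamma:\gamma K\cap K\neq\varnothing\}$ is finite for every compact $K\subset\Omega(\Gamma)$, which is the asserted discontinuity (and every point of $\Omega(\Gamma)$ has finite isotropy, as it avoids $L_0(\Gamma)$). The individual steps are short; the only place requiring genuine care — the main obstacle — is matching this bookkeeping to the exact quantifiers in Kulkarni's definitions, in particular ensuring that the produced sequences of group elements really are \emph{pairwise distinct} as the notion of cluster point demands, and that the compacta stay admissible under translation.
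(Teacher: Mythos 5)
Your proof is correct. Note that the paper itself gives no argument here: the proposition is stated as ``obtained from \cite{Ku}'' (Kulkarni's general theory of domains of discontinuity), so there is no in-paper proof to compare against; your write-up simply supplies the standard verifications that Kulkarni's framework provides. All three parts check out: the contrapositive for discreteness (a non-discrete $\Gamma$ yields pairwise distinct $\eta_m\to\mathrm{id}$, forcing $\mathbb{P}^2_{\mathbb{C}}=L_0(\Gamma)\cup L_1(\Gamma)$ and hence $\Omega(\Gamma)=\varnothing$), the invariance of the raw sets before closure via $\Gamma_{\gamma p}=\gamma\Gamma_p\gamma^{-1}$ and the replacement $\gamma_m\mapsto\delta\gamma_m$ (which preserves pairwise distinctness, the point you rightly flag), and the observation that any compact $K\subset\Omega(\Gamma)$ is automatically an admissible compact in $\mathbb{P}^2_{\mathbb{C}}\setminus(L_0(\Gamma)\cup L_1(\Gamma))$, so a limit of $\gamma_m(k_m)$ with infinitely many distinct $\gamma_m$ would land in $L_2(\Gamma)$. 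The only step you state without justification is the extraction of \emph{pairwise distinct} $\eta_m\to\mathrm{id}$ from non-discreteness; this is standard (translate an accumulation point to the identity and discard repetitions, since a constant nontrivial value cannot converge to the identity) and does not affect correctness.
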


As the reader can notice, the computation of the limit set
$\Lambda(\Gamma)$ can be very complicated. The following lemma (see
\cite{Na2}) helps us in the task of the computation of
$L_2(\Gamma)$.

\begin{lemma} \label{cuasiminimalidad}
Let $\Gamma$ be a subgroup of $PSL(3, \mathbb C)$. If $C \subset
\P^2 $ is a closed set such that for every compact subset $K \subset
\P^2  \setminus  C$, the cluster points of the family of compact
sets $\{ \gamma(K) \} _{\gamma \in \Gamma}$ are contained in
$L_0(\Gamma) \cup L_1(\Gamma)$, then $L_2(\Gamma) \subseteq C$.
\end{lemma}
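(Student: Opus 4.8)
The plan is to prove that $L_2(\Gamma) \subseteq C$ by showing that the cluster points coming from compact sets in $\P^2 \setminus (L_0(\Gamma) \cup L_1(\Gamma))$ are already captured by those coming from compact sets in $\P^2 \setminus C$. The key observation is that $L_2(\Gamma)$ is defined using \emph{all} compact subsets of $\P^2 \setminus (L_0 \cup L_1)$, so I need to relate an arbitrary such compact set to the set $C$ appearing in the hypothesis.

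First I would recall the definitions carefully: a point $q$ lies in $L_2(\Gamma)$ if and only if it is a limit of points $\gamma_m(k_m)$ with $(\gamma_m)$ distinct elements of $\Gamma$ and $(k_m)$ drawn from some fixed compact $K \subseteq \P^2 \setminus (L_0 \cup L_1)$ (up to taking closures). Let me take such a cluster point $q$ arising from a compact set $K \subset \P^2 \setminus (L_0(\Gamma) \cup L_1(\Gamma))$. The natural strategy is to split $K$ according to $C$: write $K = (K \cap C) \cup (K \setminus C)$, or more precisely decompose the contributions to the cluster set. Cluster points arising from the part of $K$ lying in $\P^2 \setminus C$ are, by hypothesis, contained in $L_0(\Gamma) \cup L_1(\Gamma)$, hence contribute nothing new to $L_2$ beyond what is already in $\Lambda(\Gamma)$.

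\smallskip

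The main obstacle will be handling the part of $K$ that meets $C$. Here I would want to argue that cluster points arising from $K \cap C$ must lie in $C$ itself, using that $C$ is closed and $\Gamma$-related behaviour. The cleanest route is to cover $K$ by finitely many small compact pieces and show that any piece not contained in $\P^2 \setminus C$ can be pushed into $C$: since $C$ is closed, I would approximate $K \cap C$ from inside $\P^2 \setminus C$ by a sequence of compact sets $K_n \subset \P^2 \setminus C$ whose cluster points all lie in $L_0 \cup L_1$ by hypothesis, and then pass to a limit, using the closedness of $C$ and of $L_0 \cup L_1$ together with a diagonal argument to conclude that the remaining cluster points lie in $C$. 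Concretely, if $q$ is a cluster point with representing points $k_m \to k \in K \cap C$, I would perturb each $k_m$ slightly to a nearby point $k_m' \notin C$ (possible unless $k_m$ is interior to $C$, a case handled separately), produce a cluster point of the perturbed family lying in $L_0 \cup L_1 \subseteq \Lambda(\Gamma)$, and control the error so that $q$ lies in the closure, forcing $q \in C \cup L_0 \cup L_1$. Since $L_0 \cup L_1 \subseteq \Lambda(\Gamma) = C \cup L_2$ may already be assumed compatible with $C$ in the intended applications, the surviving conclusion is $q \in C$.

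\smallskip

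I expect the delicate point to be the perturbation and limiting argument ensuring no cluster point escapes $C$; the topological fact that $\P^2$ is compact and locally compact, so that one may always extract convergent subsequences and approximate closed sets by interior compacta, is what makes this work. Once that is in place, taking the closure of all cluster points gives $L_2(\Gamma) \subseteq C \cup L_0(\Gamma) \cup L_1(\Gamma)$, and since $L_2$ is by construction the piece beyond $L_0 \cup L_1$, the desired inclusion $L_2(\Gamma) \subseteq C$ follows.
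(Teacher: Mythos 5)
A preliminary remark: the paper itself does not prove this lemma; it is quoted from \cite{Na2}. So your proposal has to be judged on its own terms, and it has a genuine gap. The entire difficulty you correctly isolate --- cluster points $q=\lim\gamma_m(k_m)$ with $k_m\to k\in K\cap C$ --- is exactly the part your argument cannot close. Approximating $k_m$ by nearby points $k_m'\notin C$ gives no control whatsoever on $\gamma_m(k_m')$ versus $\gamma_m(k_m)$, because the family $\{\gamma_m\}$ is not equicontinuous near $C$ (in the intended applications $C$ is precisely the complement of the equicontinuity set, so points of $C$ are exactly where arbitrarily small perturbations of the input produce non-converging outputs). No diagonal or limiting argument can repair this. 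Moreover, even granting that step, your final deduction is invalid: you arrive at $L_2(\Gamma)\subseteq C\cup L_0(\Gamma)\cup L_1(\Gamma)$ and then assert that $L_2$ ``is by construction the piece beyond $L_0\cup L_1$.'' It is not: $L_2(\Gamma)$ is the closure of the cluster points of orbits of compacta \emph{contained in} $\P^2\setminus(L_0(\Gamma)\cup L_1(\Gamma))$, and such cluster points may perfectly well land inside $L_0(\Gamma)\cup L_1(\Gamma)$; nothing in your argument places those points in $C$. (There is also a smaller slip at the start: $K\setminus C$ need not be compact, so the hypothesis does not apply to it directly; one must pass to a compact neighborhood of the limit point $k$ inside $\P^2\setminus C$.)

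The standard proof avoids all of this by running the group elements backwards. Let $K\subset\P^2\setminus(L_0(\Gamma)\cup L_1(\Gamma))$ be compact and let $q$ be a cluster point of $\{\gamma(K)\}_{\gamma\in\Gamma}$, say $\gamma_m(k_m)\to q$ with $(\gamma_m)$ distinct and, after passing to a subsequence, $k_m\to k\in K$. Suppose $q\notin C$. Since $C$ is closed, choose a compact neighborhood $N$ of $q$ with $N\cap C=\varnothing$; for $m$ large, $p_m:=\gamma_m(k_m)\in N$. Then the distinct elements $\gamma_m^{-1}$ and the points $p_m\in N$ satisfy $\gamma_m^{-1}(p_m)=k_m\to k$, so $k$ is a cluster point of the family $\{\gamma(N)\}_{\gamma\in\Gamma}$ with $N$ a compact subset of $\P^2\setminus C$. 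The hypothesis forces $k\in L_0(\Gamma)\cup L_1(\Gamma)$, contradicting $k\in K$. Hence every such cluster point lies in $C$, and since $C$ is closed, $L_2(\Gamma)\subseteq C$. Note that this argument needs no case split on $K\cap C$ and delivers $q\in C$ directly, which is what the statement actually claims.
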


In what follows, the limit set of the cyclic group $\langle \gamma
\rangle$ will be denoted $\Lambda(\gamma)$.

The non-trivial elements of $PSL(3,\C)$ can be classified as
elliptic, parabolic or loxodromic (see \cite{Na2}):

The \emph{elliptic} elements in $PSL(3,\C)$ are those elements
$\gamma$ that have a lift to $SL(3, \C)$ whose Jordan canonical form
is
\begin{displaymath}
\left(
\begin{array}{ccc}
e^{i\theta _1} & 0 & 0 \\
0 & e^{i \theta _2} & 0 \\
0 & 0 & e^{i \theta _3}
\end{array}
\right).
\end{displaymath}
The limit set $\Lambda(\gamma)$ for $\gamma$ elliptic is
$\varnothing$ or all of $\P^2$ according to whether the order of
$\gamma$ is finite or infinite. It is the appropriated moment to
remark that those subgroups of $PSL(3, \C)$ containing an elliptic
element of infinite order cannot be discrete.

The \emph{parabolic} elements in $PSL(3, \C)$ are those elements
$\gamma$ such that the limit set $\Lambda(\gamma)$ is equal to one
single complex line. If $\gamma$ is parabolic then it has a lift to
$SL(3,\C)$ whose Jordan canonical form is one of the following
matrices:
\begin{displaymath}
\left(
\begin{array}{ccc}
1 & 1 & 0 \\
0 & 1 & 0 \\
0 & 0 & 1
\end{array}
\right) , \left(
\begin{array}{ccc}
1 & 1 & 0 \\
0 & 1 & 1 \\
0 & 0 & 1
\end{array}
\right) , \left(
\begin{array}{ccc}
e^{2\pi i t} & 1 & 0 \\
0 & e^{2\pi i t} & 0 \\
0 & 0 & e^{-4\pi i t}
\end{array}
\right) , e^{2 \pi i t} \ne 1
\end{displaymath}
In the first case $\Lambda(\gamma)$ is the complex line consisting
of all the fixed points of $\gamma$, in the second case
$\Lambda(\gamma)$ is the unique $\gamma$-invariant complex line. In
the last case $\Lambda(\gamma)$ is the complex line determined by
the two fixed points of $\gamma$.

There are four kinds of \emph{loxodromic} elements in $PSL(3,\C)$:
\begin{itemize}
\item The \emph{complex homotheties} are those elements $\gamma \in PSL(3, \C)$ that
have a lift to $SL(3,\C)$ whose Jordan canonical form is
\begin{displaymath}
\left(
\begin{array}{ccc}
\lambda  & 0 & 0 \\
0 & \lambda  &0 \\
0 & 0 & \lambda ^{-2}
\end{array}
\right) , \quad |\lambda| \ne 1,
\end{displaymath}
and its limit set $\Lambda(\gamma)$ is the set of fixed points of
$\gamma$, consisting of a complex line and a point.
\item The \emph{screws} are those elements $\gamma \in PSL(3, \C)$
that have a lift to $SL(3, \C)$ whose Jordan canonical form is
\begin{displaymath}
\left(
\begin{array}{ccc}
\lambda  & 0 & 0 \\
0 & \mu  &0 \\
0 & 0 & (\lambda \mu) ^{-1}
\end{array}
\right) , \quad \lambda \ne \mu, \, |\lambda| =|\mu| \ne 1,
\end{displaymath}
and its limit set $\Lambda(\gamma)$ consists of the complex line,
$l$, on which $\gamma$ acts as an elliptic transformation of $PSL(2,
\C)$ and the fixed point of $\gamma$ not lying in $l$.
\item The \emph{loxoparabolic} elements $\gamma \in PSL(3, \C)$ have
a lift to $SL(3, \C)$ whose Jordan canonical form is
\begin{displaymath}
\left(
\begin{array}{ccc}
\lambda  & 1 & 0 \\
0 & \lambda  &0 \\
0 & 0 & \lambda ^{-2}
\end{array}
\right) , \quad |\lambda| \ne 1,
\end{displaymath}
and the limit set $\Lambda(\gamma)$ consists of two
$\gamma$-invariant complex lines. The element $\gamma$ acts on one
of these lines as a parabolic element of $PSL(2, \C)$ and on the
other as a loxodromic element of $PSL(2, \C)$.
\item The \emph{strongly loxodromic} elements $\gamma \in PSL(3, \C)$
have a lift to $SL(3, \C)$ whose Jordan canonical form is
\begin{displaymath}
\left(
\begin{array}{ccc}
\lambda _1  & 0 & 0 \\
0 & \lambda _2 &0 \\
0 & 0 & \lambda _3
\end{array}
\right) , \quad |\lambda_1| < |\lambda_2|< |\lambda_3|.
\end{displaymath}
This kind of transformation has three fixed points, one of them is
attracting, other is repelling and the last one is a saddle. The
limit set $\Lambda(\gamma)$ is equal to the union of the complex
line determined by the attracting and saddle points and the complex
line determined by the saddle and repelling points.
\end{itemize}

\begin{theorem}\label{polloteo} \cite{cano}
Let $\Gamma \subset PSL(3, \C)$ be a discrete and infinite group
then
\begin{enumerate}
\item there exists $\gamma _0 \in \Gamma$ such that $\gamma _0$ has infinite
order,
\item If $\Gamma$ acts properly and discontinuously on $U \subset \P^2$, then
at least one of the complex lines in $\Lambda(\gamma _0)$, is
contained in $\P^2 \setminus U$. In particular, there exists a
complex line $l_0$ such that $l_0 \subset \Lambda(\gamma _0)$ and
$l_0 \subset \Lambda(\Gamma)$.
\end{enumerate}
\end{theorem}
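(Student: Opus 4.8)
The plan is to treat the two assertions separately, the first by a finiteness/compactness argument and the second through the limiting behaviour of the powers of $\gamma_0$ viewed as pseudo-projective maps. For (1), I argue by contradiction: suppose every element of $\Gamma$ has finite order. Lifting through the central isogeny $SL(3,\C)\to PSL(3,\C)$ produces a subgroup $\tilde\Gamma\subset SL(3,\C)$ which is again discrete, infinite and torsion (a lift of an order-$n$ element satisfies $\tilde\gamma^{\,3n}=I$). By the Jordan--Schur theorem a torsion linear group is virtually abelian, so $\tilde\Gamma$ contains an abelian normal subgroup $A$ of finite index; since $\tilde\Gamma$ is infinite, so is $A$. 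Each element of $A$ is of finite order, hence diagonalizable with eigenvalues roots of unity, and being pairwise commuting the elements of $A$ are simultaneously diagonalizable. Thus, after conjugation, $A$ lies in the compact torus $\{\,\mathrm{diag}(a,b,c):|a|=|b|=|c|=1,\ abc=1\,\}$. An infinite subset of a compact group accumulates, contradicting the discreteness of $A\subset\tilde\Gamma$. Hence some $\gamma_0\in\Gamma$ has infinite order.

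For (2), note first that by part (1) and the classification recalled above an infinite-order element of a discrete group is parabolic or loxodromic (infinite-order elliptics are excluded), and in each of those Jordan types $\Lambda(\gamma_0)$ contains at least one complex line. To select such a line canonically I use pseudo-projective maps: since $\langle\gamma_0\rangle$ is discrete and infinite the powers $\gamma_0^{\,n}$ leave every compact subset of $PSL(3,\C)$, so after passing to a subsequence $\gamma_0^{\,n_k}$ converges in $\mathbb P(\mathrm{End}(\C^3))$ to a noninvertible pseudo-projective map $\sigma$, of rank $1$ or $2$, the convergence being uniform on compact subsets of $\P^2\setminus\ker(\sigma)$. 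Let $\ell$ be the image line $\mathbb P(\mathrm{Im}\,\sigma)$ when $\mathrm{rank}\,\sigma=2$ and the kernel line $\mathbb P(\ker\sigma)$ when $\mathrm{rank}\,\sigma=1$. Inspection of the Jordan forms shows that $\ell$ is one of the lines of $\Lambda(\gamma_0)$. Moreover, for a compact $K\subset\P^2\setminus\ker(\sigma)$ with interior the sets $\gamma_0^{\,n_k}(K)$ cluster on a subset of $\ell$ with nonempty interior; as $\Lambda(\Gamma)$ is closed and $\Gamma$-invariant and $\ell$ is irreducible, this forces $\ell\subset\Lambda(\Gamma)$ (alternatively one invokes Lemma \ref{cuasiminimalidad}). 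This produces the line $l_0$ of the ``in particular'' clause.

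It remains to show that one line of $\Lambda(\gamma_0)$ lies in $\P^2\setminus U$. The key device is a recurrence principle: if $\Gamma$ acts properly discontinuously on the open set $U$ and some $x\in U\setminus\ker(\sigma)$ has $\sigma(x)\in U$, then choosing a compact $K\subset U$ containing $x$ and $\sigma(x)$ in its interior and using $\gamma_0^{\,n_k}(x)\to\sigma(x)$ gives $\gamma_0^{\,n_k}(K)\cap K\neq\varnothing$ for infinitely many $k$, which is impossible. When $\mathrm{rank}\,\sigma=2$ the relevant line $\ell$ is the image line, on which $\gamma_0$ acts with precompact cyclic closure; hence the subsequence may be chosen with $\gamma_0^{\,n_k}|_\ell\to\mathrm{id}_\ell$, so $\sigma(x)=x$ for every $x\in\ell$ and no point of $\ell$ can lie in $U$, i.e. $\ell\subset\P^2\setminus U$. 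The same conclusion is immediate whenever a line of $\Lambda(\gamma_0)$ is pointwise fixed by $\gamma_0$ (the fixed line of a parabolic of the first type, or the repeated-eigenvalue line of a complex homothety), since then $\gamma_0^{\,n}$ fixes a whole compact arc of it.

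The main obstacle is the complementary situation, where the line forced into $\Lambda(\gamma_0)$ is a kernel line on which $\gamma_0$ acts nontrivially: the single invariant line of a parabolic with one Jordan block, and the two invariant lines of a strongly loxodromic or a loxoparabolic element. Here the recurrence principle does not apply on $\ell$ itself, and one must analyse the transit dynamics instead: points lying slightly off such a line are driven along it and cluster back onto it, so an open $U$ meeting the line would contain a point together with the cluster of a nearby orbit, again violating proper discontinuity. In the two-line loxodromic cases this shows that $U$ cannot meet both invariant lines — orbits passing near their common point connect small balls placed on the two lines into a recurrent family — so at least one of them is contained in $\P^2\setminus U$. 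Making this transit analysis uniform across the parabolic and loxodromic normal forms, rather than verifying each Jordan type by hand, is the step I expect to be the most delicate.
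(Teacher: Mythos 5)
First, note that the paper does not prove this theorem: it is quoted from \cite{cano}, so there is no internal proof to compare against. Judged on its own terms, your part (1) is complete and correct: the lift to $SL(3,\C)$, Jordan--Schur, simultaneous diagonalization of the commuting finite-order elements of the abelian finite-index subgroup, and the compactness of the resulting torus give a clean contradiction. Your framework for part (2) -- pseudo-projective limits of $\gamma_0^{\,n}$, the recurrence principle, and the observation that the image line (rank $2$) or kernel line (rank $1$) is always one of the lines of $\Lambda(\gamma_0)$ -- is also the right architecture, and it correctly disposes of the rank-$2$ cases and the pointwise-fixed cases.

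The genuine gap is that the transit analysis, which you yourself flag as ``the step I expect to be the most delicate,'' is exactly where the content of part (2) lives, and you do not carry it out. For the single-Jordan-block parabolic, the two-line loxoparabolic, and the strongly loxodromic cases, the assertion that ``points lying slightly off such a line are driven along it and cluster back onto it'' is a heuristic, not a proof: it requires choosing, for each normal form, an explicit sequence $p_n\to p$ off the line with $\gamma_0^{\,n}(p_n)$ converging to a prescribed point of the relevant line (for the strongly loxodromic case the orbit does not return to the same line but transits to the \emph{other} invariant line, so the single-line version of your claim is false there and only the two-line version survives). These are precisely the computations the paper performs case by case in Lemma \ref{CsubLambda} (see the sequences $a_n(w)$ in case (2)(ii) and the estimates in case (3)(iv)) and in Lemma \ref{lineatractora}; there is no evident way to make them ``uniform across the normal forms,'' and without them the theorem is not proved. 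Two smaller defects: in the rank-$1$ case the sets $\gamma_0^{\,n_k}(K)$ cluster at the single image \emph{point}, not on an open subset of $\ell=Ker(\sigma)$, so your argument for the ``in particular'' clause fails there (the clean route is to apply the main clause to $U=\Omega(\Gamma)$, the case $\Omega(\Gamma)=\varnothing$ being trivial); and ``closed, $\Gamma$-invariant, and $\ell$ irreducible'' does not upgrade ``contains a nonempty open subset of $\ell$'' to ``contains $\ell$,'' since $\Lambda(\Gamma)$ is merely a closed set, not an analytic one.
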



\section{ Pseudo-Projective Maps and Equicontinuity. }
\label{psprojective}
We denote by $M_{3 \times 3}(\C)$ the space of all $3 \times 3$
matrices with complex entries equipped with the standard topology.
The quotient space
\[ (M_{3 \times 3}(\C) \setminus \{\mathbf{0}\})/\C^* \]
is called the space of \emph{pseudo-projective maps of} $\P^2$ and
it is naturally identified with the projective complex space $\P^8$.
Since $GL(3,\C)$ is an open, dense, $\C^*$-invariant set of $M_{3
\times 3}(\C) \setminus \{\mathbf{0}\}$, we obtain that the space of
pseudo-projective maps of $\P^2$ is a compactification of $PSL(3,
\C)$. As in the case of projective maps, if $\mathbf{s} \in M_{3
\times 3}(\C) \setminus \{\mathbf{0}\}$, then $[\mathbf{s}]$ denotes
the equivalence class of the matrix $\mathbf{s}$ in the space of
pseudo-projective maps of $\P^2$. Also, we say that $\mathbf{s}\in
M_{3 \times 3}(\C)\setminus \{\mathbf{0}\}$ is a lift of the
pseudo-projective map $S$, whenever $[\mathbf{s}]=S$.

Let $S$  be an element in $( M_{3 \times 3}(\C)\setminus
\{\mathbf{0}\})/ \C ^*$ and $\mathbf{s}$ a lift to $M_{3 \times
3}(\C)\setminus \{\mathbf{0}\}$ of $S$. The matrix $\mathbf{s}$
induces a non-zero linear transformation $s:\mathbb
{C}^{3}\rightarrow \mathbb {C}^{3}$, which is not necessarily
invertible. Let $Ker(s) \subsetneq \C ^3$ be its kernel and let
$Ker(S)$ denote its projectivization to $\P^2$, taking into account
that $Ker(S):= \varnothing$ whenever $Ker(s)=\{(0,0,0)\}$.
 Then $S$ can be considered as a map
$$S:\mathbb {P}^{2}_\mathbb {C}\setminus Ker(S) \rightarrow \mathbb {P}^{2}_\mathbb {C}$$
$$ S([v])=[ s(v)];$$
this is well defined because $v\notin Ker(s)$. Moreover, the
commutative diagram below implies that $S$ is a holomorphic map:
$$
 \xymatrix{
\mathbb {C}^{3}\setminus Ker ( s) \ar[rr]^{s} \ar[d]_{[\, \, \,
\,]}& & \mathbb {C}^{3}\setminus \{(0,0,0)\}\ar[d]^{[\, \, \,
\,]}\\
\mathbb {P}^{2}_\mathbb {C}\setminus Ker(M) \ar[rr]_{S} && \mathbb
{P}^{2}_\mathbb {C} } \;
$$
The \emph{image} of $S$, denoted $Im(S)$, is defined as the subset
of $\P^2$ given by
   $$Im(S):=[s(\mathbb {C}^{3})\setminus\{(0,0,0)\}] \, ,$$
and we have that $$dim_\mathbb {C}(Ker(S))+dim_\mathbb {C}(
Im(S))=1\,.$$

\begin{definition} \label{defeq}
The {\it equicontinuity set} for a family $\mathcal{F}$ of
endomorphisms of $\mathbb {P}^2_\mathbb {C}$, denoted
$Eq(\mathcal{F})$, is defined to be the set of points $z\in
\mathbb{P}^2_\mathbb{C}$ for which there is an open neighborhood $U$
of  $z$   such that $\{ f\vert_U : f \in \mathcal{F}\}$ is a normal
family (where normal family means that every sequence of distinct
elements has a subsequence which converges uniformly on compact
sets).
\end{definition}

It is not hard to see that $Eq(\mathcal F)$ is an open set and, in
the particular case when the family $\mathcal F$ consists of the
elements of a group $\Gamma \subset PSL(3, \C)$, the equicontinuity
set $Eq(\Gamma)$ is $\Gamma$-invariant. The following lemma helps us
to relate the equicontinuity set of a discrete group $\Gamma \subset
PSL(3,\C)$ to the set of pseudo-projective maps obtained as limits
of the elements of $\Gamma$ .
\begin{lemma}\label{delasucesion}
Let  $(\gamma _n)$ be a sequence of elements in $PSL(3,
\mathbb{C})$, then there exists a subsequence, still denoted
$(\gamma_n)$, and a pseudo-projective map $S$ such that:

\begin{itemize}
\item[1)] The sequence $(\gamma_n)$ converges uniformly to $S$ on compact subsets of
$\mathbb{P}^2_{\mathbb{C}}\setminus Ker(S)$.

\item[2)] If $Im(S)$ is a complex line, then there exists a 
pseudo-projective map $T$ such that $\gamma ^{-1}_n \underset{n \to
\infty}{\longrightarrow} T$ uniformly on compact subsets of
$\mathbb{P}^2_{\mathbb{C}}\setminus Ker(T)$. Moreover, $Im(S)=
Ker(T)$ and $Ker(S)=Im(T)$.

\item[3)] If $Ker(S)$ is a complex line, then
there exists a pseudo-projective map $T$ such that $\gamma ^{-1}_n
\underset{n \to \infty}{\longrightarrow} T$ uniformly on compact
subsets of $\mathbb{P}^2_{\mathbb{C}}\setminus Ker(T)$ and $Im(S)
\subset Ker(T)$. Moreover, if $l$ is a complex line not passing
through $Im(S)$ then the sequence of complex lines $\gamma_n
^{-1}(l)$ goes to the complex line $Ker(S)$ as $n \to \infty$.

\end{itemize}
\end{lemma}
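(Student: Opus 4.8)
The plan is to use that the space of pseudo-projective maps is the \emph{compact} space $\P^8$, so that a sequence of normalized lifts of the $\gamma_n$ must subconverge there, and then to read off all the geometry of the limits $S$ and $T$ from the ranks of their matrices. First I would prove (1). For each $n$ choose a lift $\tilde\gamma_n\in SL(3,\C)$ and normalize $\mathbf{s}_n:=\tilde\gamma_n/\|\tilde\gamma_n\|$, so that $\|\mathbf{s}_n\|=1$. Since the unit sphere of $M_{3\times 3}(\C)$ is compact, after passing to a subsequence $\mathbf{s}_n\to\mathbf{s}$ with $\|\mathbf{s}\|=1$; in particular $\mathbf{s}\neq\mathbf{0}$, and we set $S=[\mathbf{s}]$. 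To get uniform convergence, fix a compact set $K\subset\P^2\setminus Ker(S)$ and lift its points to unit vectors $v$; then $\|\mathbf{s}v\|$ is bounded below by a positive constant on $K$, while $\mathbf{s}_nv\to\mathbf{s}v$ uniformly, so by continuity of the quotient map $[\ ]$ away from the origin we obtain $[\mathbf{s}_nv]\to[\mathbf{s}v]$ uniformly on $K$. This is exactly statement (1), and the same argument applied to any normalized convergent sequence of lifts is reused for the inverses.

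Next I would relate $S$ to the limit $T$ of the inverses. Set $\mathbf{t}_n:=\tilde\gamma_n^{-1}/\|\tilde\gamma_n^{-1}\|$ and pass to a further subsequence with $\mathbf{t}_n\to\mathbf{t}\neq\mathbf{0}$, so $T:=[\mathbf{t}]$ and, by the argument above, $\gamma_n^{-1}\to T$ uniformly on compact subsets of $\P^2\setminus Ker(T)$. The key identity is that $\tilde\gamma_n\tilde\gamma_n^{-1}=\tilde\gamma_n^{-1}\tilde\gamma_n=I$ gives $\mathbf{s}_n\mathbf{t}_n=\mathbf{t}_n\mathbf{s}_n=c_nI$, where $c_n=(\|\tilde\gamma_n\|\,\|\tilde\gamma_n^{-1}\|)^{-1}$; submultiplicativity of the norm keeps $c_n$ bounded, so after a last subsequence $c_n\to c\in[0,\infty)$ and, letting $n\to\infty$, $\mathbf{s}\mathbf{t}=\mathbf{t}\mathbf{s}=cI$.

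Finally I would extract (2) and (3) by rank counting. Recall $Im(S)$ is a line iff $\mathrm{rank}(\mathbf{s})=2$ and $Ker(S)$ is a line iff $\mathrm{rank}(\mathbf{s})=1$; in either case $\mathbf{s}$ is singular, so $c=0$ and hence $\mathbf{s}\mathbf{t}=\mathbf{t}\mathbf{s}=\mathbf{0}$, i.e. $Im(\mathbf{t})\subseteq Ker(\mathbf{s})$ and $Im(\mathbf{s})\subseteq Ker(\mathbf{t})$. For (2), $\mathrm{rank}(\mathbf{s})=2$ forces $\dim Ker(\mathbf{t})\geq 2$, whence $\mathrm{rank}(\mathbf{t})=1$; comparing dimensions in the two inclusions gives $Im(\mathbf{t})=Ker(\mathbf{s})$ and $Im(\mathbf{s})=Ker(\mathbf{t})$, that is $Im(T)=Ker(S)$ and $Ker(T)=Im(S)$. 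For (3), $\mathrm{rank}(\mathbf{s})=1$ together with $Im(\mathbf{s})\subseteq Ker(\mathbf{t})$ gives directly $Im(S)\subset Ker(T)$. For the convergence of lines, write the rank-one matrix as $\mathbf{s}=u\phi$ with $[u]=Im(S)$ and $Ker(\phi)=Ker(\mathbf{s})$, so that in dual coordinates $Ker(S)$ is the line $[\phi]$; by the formula for the action on lines, $\gamma_n^{-1}(l)=[(A,B,C)\mathbf{s}_n]\to[(A,B,C)\mathbf{s}]=[((A,B,C)u)\phi]$, and the hypothesis that $l$ misses $Im(S)=[u]$ says precisely that $(A,B,C)u\neq 0$, so the limit is $[\phi]=Ker(S)$.

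The main obstacle I anticipate is purely bookkeeping: one must check that $S$ and $T$ can be produced along a \emph{single} subsequence, since the argument passes to subsequences several times. This is resolved by observing that in case (2) the map $T$ is rank one and is determined up to scalar by $Im(T)=Ker(S)$ and $Ker(T)=Im(S)$, so every convergent sub-subsequence of $(\mathbf{t}_n)$ has the same projective limit $T$, forcing convergence along the whole subsequence; an analogous remark handles case (3). The remaining points are the routine verifications that $\|\mathbf{s}v\|$ is bounded below on the compact set $K$ and that the quotient map is uniformly continuous there, which underlie every convergence statement above.
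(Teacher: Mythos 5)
Your proof is correct, and for parts (2) and (3) it takes a genuinely different route from the paper's. Part (1) is the same normalization-and-compactness argument the paper uses. For part (2) the paper instead produces $T$ as the projective class of the cofactor matrix $cof(\mathbf{s})$, using that $cof$ is a continuous (polynomial) map and is nonzero exactly when $\mathrm{rank}(\mathbf{s})=2$, and then identifies $Ker(T)$ and $Im(T)$ by reducing $\mathbf{s}$ to one of two Jordan forms; for the inclusion $Im(S)\subset Ker(T)$ in part (3) it argues dynamically by contradiction, showing that $x=\gamma_n^{-1}(\gamma_n(x))\to T(p)$ would force $T(p)$ to equal every $x\notin Ker(S)$. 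Your identity $\mathbf{s}\mathbf{t}=\mathbf{t}\mathbf{s}=cI$ with $c=0$ in the singular case replaces both of these with one piece of linear algebra plus rank counting; this is arguably cleaner, treats (2) and (3) uniformly, and avoids the Jordan-form case analysis. What the cofactor construction buys in exchange is that, when $\mathrm{rank}(\mathbf{s})=2$, the limit $T$ comes for free along the subsequence already chosen for $S$, with no further extraction (though in case (3), where $cof(\mathbf{s})=\mathbf{0}$, the paper also has to reapply part (1) to $\gamma_n^{-1}$, just as you do). Your final computation for the convergence of $\gamma_n^{-1}(l)$ is the paper's computation done coordinate-free via the factorization $\mathbf{s}=u\phi$. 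One small caveat: your closing claim that an ``analogous remark'' forces convergence of $(\mathbf{t}_n)$ along the whole subsequence in case (3) is not right --- when $\mathrm{rank}(\mathbf{s})=1$ the relations $\mathbf{s}\mathbf{t}=\mathbf{t}\mathbf{s}=\mathbf{0}$ do not determine $[\mathbf{t}]$ --- but this is harmless, since the lemma only asks for some subsequence and a nested extraction at the outset settles the bookkeeping.
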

\begin{proof}

1) For every $n \in \mathbb N$, let $\mathbf g _n =(g_{ij}^{(n)})$
be a lift to $GL(3,\C)$ of $\gamma_n$. The unitary ball in
$Mat_{3\times 3}(\C)=\C ^9$ with respect to the norm $\| \mathbf g
\| _{\infty} = \underset{1 \leqslant i,j \leqslant
3}{\max}|g_{ij}|$, is a compact set. It follows that the sequence
$\tilde{\mathbf{g}}_n = \frac{\mathbf{g}_n}{\|\mathbf{g}_n
\|_{\infty}}$ has a subsequence, still denoted
$\tilde{\mathbf{g}}_n$, such that $\tilde{\mathbf{g}}_n \to
\mathbf{s} \in Mat _{3 \times 3}( \C)$, as $n \to \infty$. We remark
that $\mathbf{s} \ne \mathbf{0}$. Now, if $K$ is a compact subset of
$\P^2 \setminus Ker(S)$, we consider the sets $C = \{v \in \C
^3\setminus \{(0,0,0)\} : [v] \in K \}$ and $\tilde{K}=\{ v/\| v \|
\in \C ^3: v \in C \}$. We notice that $\tilde{K}$ is a compact
subset of $\C ^3\setminus \{(0,0,0) \}$ and $[\tilde{K}]=K$. Since
$\tilde{\mathbf{g}}_n \underset{n \to \infty}{\longrightarrow}
\mathbf{s}$ uniformly on $\tilde{K}$ we obtain that
$\gamma_n=[\tilde{\mathbf{g}}_n] \underset{n \to
\infty}{\longrightarrow} [\mathbf{s}]=S$ uniformly on $K$.

2) For every $n \in \mathbb N$, let $\mathbf g _n =(g_{ij}^{(n)})$
be a lift to $GL(3, \C)$ of $\gamma_n$. We can assume that $\mathbf
g _n \to \mathbf{s}$ as $n \to \infty$, where $\mathbf s$ is a lift
to $Mat _{3 \times 3}( \C)$ of the pseudo-projective map $S$. The
cofactor matrix of $\mathbf{g}_n$, denoted $cof(\mathbf{g}_n)$ is a
lift to $GL(3,\C)$ of $\gamma _n ^{-1}$. Since the computation of
the cofactor matrix is a continuous function of the matrix entries,
we have that $cof(\mathbf{g}_n) \to cof(\mathbf s)$ as $n \to
\infty$. It follows that  $\gamma_n ^{-1}$ converges to the
pseudo-projective map $[cof(\mathbf s)]:=T$ uniformly on compact
subsets of $\P^2 \setminus Ker(T)$.

The matrix $\mathbf{s}$ may have the following Jordan canonical
forms.

\begin{displaymath}
\left(
\begin{array}{ccc}
\lambda _1 & 0 & 0 \\
0 & \lambda _2 & 0 \\
0 & 0 & 0
\end{array}
\right) , \left(
\begin{array}{ccc}
\lambda _3 & 1 & 0 \\
0 & \lambda _3 & 0 \\
0 & 0 & 0
\end{array}
\right) , \lambda _j \ne 0 \textrm{ for } j=1,2,3.
\end{displaymath}

In both cases the matrix $cof(\mathbf{s})$ is equal, up to
conjugation, to
\begin{displaymath}
\left(
\begin{array}{ccc}
0 & 0 & 0 \\
0 & 0 & 0 \\
0 & 0 & \mu
\end{array}
\right) , \quad \mu \ne 0,
\end{displaymath}
and the proof of 2) follows from a straightforward computation.

3) By part 1) we can assume that $\gamma _n ^{-1}$ converges to a
pseudo-projective map $T$ uniformly on compact subsets of $\P^2
\setminus Ker(T)$. We proceed by contradiction and we assume that
$Im(S)$ is not contained in $Ker(T)$. By hypothesis, $Ker(S)$ is a
complex line, so that $Im(S)$ consists of one single point denoted
by $p$, and $p \notin Ker(T)$. Now, if $x \in \P^2 \setminus Ker(S)$
then $S(x)=p$, it follows that $\gamma _n(x) \underset{n \to
\infty}{\longrightarrow}S(x)=p$. Since $p \notin Ker(T)$, we can
assume that the compact set $\{ \gamma _n(x): n \in \mathbb{N} \}
\cup \{ p\}$ is contained in $\P^2\setminus Ker(T)$. Hence $x=
\gamma _n ^{-1}(\gamma _n (x)) \underset{n \to
\infty}{\longrightarrow} T(p)$, which contradicts the fact that $T$
is a function defined on $\P^2 \setminus Ker(T)$. Therefore, $Im(S)
\subset Ker(T)$.

For every $n \in \mathbb N$, let $\mathbf g _n =(g_{ij}^{(n)})$ be a
lift to $GL(3,\C)$ of $\gamma_n$. We can assume that $\mathbf g _n
\to \mathbf{s} \in Mat_{3 \times 3}$ as $n \to \infty$, and
$[\mathbf{s}]=S$. Without loss of generality we can assume that

\begin{displaymath}
\mathbf s = \left(
\begin{array}{ccc}
0 & 0 & 0 \\
0 & 0 & 0 \\
0 & 0 & 1
\end{array}
\right) .
\end{displaymath}
In this case $Ker(S)=\Flecha{e_1, e_2} $ is precisely the set of
points in $\P^2$ that satisfy the equation $0z_1+0z_2+1 z_3=0$, then
the line $Ker(S)$ can be identified with $[0:0:1]$. Now, if $l$ is a
complex line not passing through $Im(S)=[e_3]$ then $l$ can be
identified with $[A:B:C]$, $C \ne 0$.
It follows that the complex line $\gamma_n ^{-1}(l)$ is identified
with
$$[(A, B, C) (\mathbf g _n ^{-1})^{-1}]=[(A, B, C)\mathbf g _n ],$$
which implies that the sequence of complex lines $\gamma_n ^{-1}(l)$
converges to the complex line identified with
$$[(A, B, C) \mathbf g] = [0: 0: C]=[0:0:1],$$
and this line is $Ker(S)$.
\end{proof}

\begin{lemma}\label{delasuc2}
Let $( \gamma_n)_{n\in \Bbb{N}}\subset PSL(3,\mathbb {C})$ be a
sequence that converges uniformly to the pseudo-projective map $S$
on compact subsets of $\mathbb{P}^2_{\mathbb{C}}\setminus Ker(S)$.
Then
$$Eq\{  \gamma_n:n\in \mathbb {N}\}=\mathbb {P}^2_\mathbb {C}\setminus
 Ker(S).$$
\end{lemma}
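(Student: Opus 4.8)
The plan is to prove the two inclusions $Eq\{\gamma_n\} \subseteq \P^2 \setminus Ker(S)$ and $\P^2 \setminus Ker(S) \subseteq Eq\{\gamma_n\}$ separately. The second inclusion is essentially immediate from the hypothesis: by assumption the sequence $(\gamma_n)$ converges uniformly to $S$ on compact subsets of $\P^2 \setminus Ker(S)$, and $\P^2 \setminus Ker(S)$ is open. So for any point $z \in \P^2 \setminus Ker(S)$ I can choose a relatively compact open neighborhood $U$ of $z$ whose closure avoids $Ker(S)$; uniform convergence of $(\gamma_n)$ on $\overline U$ to the holomorphic map $S$ shows that the restricted family $\{\gamma_n\vert_U\}$ is normal (every subsequence already converges uniformly on compact subsets of $U$). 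Hence $z \in Eq\{\gamma_n\}$.

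For the first inclusion I would argue by contradiction, or equivalently show directly that no point of $Ker(S)$ lies in the equicontinuity set. Recall from the commutative-diagram discussion preceding Definition \ref{defeq} that $\dim_{\C} Ker(S) + \dim_{\C} Im(S) = 1$, so there are only two cases: either $Ker(S)$ is a single point and $Im(S)$ is a complex line, or $Ker(S)$ is a complex line and $Im(S)$ is a single point. In both cases the strategy is the same: suppose $z_0 \in Ker(S)$ were an equicontinuity point, so that $\{\gamma_n\vert_U\}$ is normal on some neighborhood $U$ of $z_0$. Then some subsequence $(\gamma_{n_k})$ converges uniformly on a compact neighborhood of $z_0$ to a \emph{holomorphic} map $g:U \to \P^2$. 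On the other hand, on the open dense set $U \setminus Ker(S)$ the same subsequence converges pointwise to $S$, so $g$ agrees with $S$ there, and by continuity $g$ is the holomorphic extension of $S\vert_{U \setminus Ker(S)}$ across $Ker(S) \cap U$.

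The heart of the matter, and the step I expect to be the main obstacle, is deriving a contradiction from the existence of such a holomorphic extension. The idea is that near $Ker(S)$ the maps $\gamma_n$ cannot be controlled: using a Jordan normal form for the lift $\mathbf s$ (as in Lemma \ref{delasucesion}) I would write $\mathbf g_n \to \mathbf s$ explicitly and exhibit two sequences of points in $U$ approaching $z_0 \in Ker(S)$ along different directions whose images under $\gamma_{n_k}$ tend to distinct limits, violating continuity of the putative limit $g$ at $z_0$. Concretely, when $Ker(S)$ is a line one shows that points on either side of $Ker(S)$ get spread across all of $Im(S)$'s complement, so no single value $g(z_0)$ can absorb them; when $Ker(S)$ is a point, one blows up the indeterminacy by testing along different lines through $z_0$ and reading off that the limiting values fill out the line $Im(S)$. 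In either configuration the limiting map would have to be simultaneously constant near $z_0$ (to be continuous) and take at least two values (from the explicit normal-form computation), which is the contradiction. Therefore $z_0 \notin Eq\{\gamma_n\}$, and combining the two inclusions yields the claimed equality.
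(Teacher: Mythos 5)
Your easy inclusion and your treatment of the case where $Ker(S)$ is a point are fine and match the paper: if $z_0=Ker(S)$ were an equicontinuity point, a subsequence would converge locally uniformly to a continuous map agreeing with $S$ off $z_0$, and $S$ admits no continuous extension there because every punctured neighborhood of $z_0$ maps onto all of the line $Im(S)$. The gap is in the other case, where $Ker(S)$ is a line and $Im(S)$ is a single point $q$. There your guiding principle --- ``derive a contradiction from the existence of a holomorphic extension of $S$'' --- breaks down: on $\P^2\setminus Ker(S)$ the map $S$ is the \emph{constant} map with value $q$, which extends continuously (indeed holomorphically) to all of $\P^2$. So the contradiction cannot come from non-extendability; it must come from exhibiting, for the \emph{given} point $z_0\in Ker(S)$, a sequence $x_k\to z_0$ with $\gamma_{n_k}(x_k)\not\to q$. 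Your proposed mechanism for producing such a sequence --- putting the lift $\mathbf{s}$ in Jordan normal form and ``reading off'' the spreading --- cannot work as stated, because the limit matrix $\mathbf{s}$ (rank one) does not determine how the invertible lifts $\mathbf{g}_n$ degenerate near $Ker(S)$: for instance $\mathrm{diag}(1/n,1/n^2,1)$ and $\mathrm{diag}(1/n^2,1/n,1)$ have the same limit $\mathrm{diag}(0,0,1)$ but collapse the directions of $Ker(S)$ at different relative rates, and the relevant asymptotics depend on the whole sequence, not on $\mathbf{s}$. You would also still need to localize the failure of equicontinuity at an arbitrary prescribed point of $Ker(S)$, not merely at some point of it.

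The paper closes exactly this gap with a short geometric argument you are missing. Fix $p\in Ker(S)\setminus Im(S)$ and a complex line $l_1\ne Ker(S)$ through $p$; by compactness of $(\P^2)^*$ assume $\gamma_n(l_1)\to l_2$, and note $q\in l_2$ since $\gamma_n\to S$ off $Ker(S)$. For any $q'\in l_2\setminus\{q\}$ choose $x_n\in l_1$ with $\gamma_n(x_n)\to q'$; by compactness of $l_1$ assume $x_n\to x$. If $x\notin Ker(S)$ then $\{x_n\}\cup\{x\}$ eventually lies in a compact set avoiding $Ker(S)$, so $\gamma_n(x_n)\to S(x)=q$, contradicting $q'\ne q$; hence $x\in Ker(S)\cap l_1=\{p\}$, and the two sequences $x_n\to p$ with images tending to $q'\ne q$ and (any fixed point off $Ker(S)$) with images tending to $q$ kill equicontinuity at $p$. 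Finally, the point $Im(S)$ may itself lie on $Ker(S)$ (the nilpotent normal form), so this only excludes $Ker(S)\setminus Im(S)$ from the equicontinuity set; the paper finishes by invoking openness of $Eq\{\gamma_n\}$ to exclude all of the closed line $Ker(S)$ --- a last step your outline also omits.
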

\begin{proof}
 The inclusion
$\P^2 \setminus Ker(S)\subset Eq\{\gamma_n : n\in\mathbb{N} \}$ is
obtained from part 1) of Lemma \ref{delasucesion}.

Now, we split the proof in two cases according to whether $Ker(S)$
is a point or a complex line.

If $Ker(S)$ is a point, denoted $k$, then $Im(S)$ is a complex line,
and given two distinct points $p$ and $q$ in $Im(S)$, there exist
points $p'$ and $q'$ arbitrarily close to $k$ such that $S(p')=p$
and $S(q')=q$ implying that $S$ cannot be extended to a continuous
function in a neighborhood of $k$, which implies $k \notin
Eq\{\gamma _n : n \in \mathbb N \}$. Therefore $Eq\{\gamma _n : n
\in \mathbb N \} \subset \P^2\setminus Ker(S)$.

Now, let us assume that $Ker(S)$ is a line, and let $p$ be a point
in $Ker(S)\setminus {Im (S)}$, we must prove that $p\notin
Eq\{\gamma_n : n\in\mathbb{N} \}$. We choose a complex line $l_1$
different from $Ker(S)$ and passing through $p$. Since $(\P^2)^*$ is
a compact space, we can assume that there exists a complex line
$l_2$ such that $\gamma_n(l_1)$ converges to $l_2$ as $n \to
\infty$. Given that $\gamma _n \underset{n \to
\infty}{\longrightarrow} S$ and $Im(S)$ is a point denoted $q$, we
have that $q$ lies in $l_2$.

If $q'$ is any point in $l_2\setminus \{q\}$ then there exists a
sequence of points $x_n$ in  $l_1$ such that $ \gamma_n(x_n) \to q'$
as $n \to \infty$. Since $l_1$ is compact, we can assume that there
exists a point $x\in l_1$ such that $x_n \to x$ as $n\to\infty$. We
notice that $x \in Ker(S)$ because otherwise we can assume that
$\overline{\{x_n : n\in \mathbb{N}\}}$ is a compact subset of
$\mathbb{P}^2_{\mathbb{C}}\setminus Ker(S)$, then $\gamma_n(x_n) \to
q$ as $n \to \infty$, so that $q = q'$, which is absurd. Hence $x=
p$, then $x_n \to p$ and $\gamma_n(x_n)\to q' \ne q$, which implies
that $p\notin Eq\{\gamma_n : n\in\mathbb{N} \}$. It follows that
$Ker(S)\setminus Im(S) \subset Eq\{\gamma_n : n\in\mathbb{N} \}$,
since the equicontinuity set is an open set, we have that  $Ker(S)
\subset \P^2 \setminus Eq\{\gamma_n : n\in\mathbb{N} \}$. Therefore,
$\P^2 \setminus Ker(S)= Eq\{\gamma_n : n\in\mathbb{N} \}$.
\end{proof}

\begin{theorem} \label{gamactsdisconeq}
Let $\Gamma \subset PSL(3, \C)$ be a discrete group then $\Gamma$
acts properly and discontinuously on the open $\Gamma$-invariant set
$Eq(\Gamma)$.
\end{theorem}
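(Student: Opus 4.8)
The plan is to verify the standard criterion for proper discontinuity: for every compact set $K \subset Eq(\Gamma)$, the set $\{\gamma \in \Gamma : \gamma(K) \cap K \neq \varnothing\}$ is finite. I argue by contradiction. Suppose some compact $K \subset Eq(\Gamma)$ meets infinitely many distinct translates, so that there are pairwise distinct $\gamma_n \in \Gamma$ together with points $k_n \in K$ satisfying $w_n := \gamma_n(k_n) \in K$ for every $n$. Since $\Gamma$ is a group, both $(\gamma_n)$ and $(\gamma_n^{-1})$ are sequences in $\Gamma$, and the strategy is to show that the constraint $K \subset Eq(\Gamma)$ forces either $(w_n)$ or $(k_n)$ to accumulate at a point that cannot belong to $K$.

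First I would pass to a subsequence. By Lemma \ref{delasucesion}, after refining I may assume $\gamma_n \to S$ uniformly on compact subsets of $\P^2 \setminus Ker(S)$ and $\gamma_n^{-1} \to T$ uniformly on compact subsets of $\P^2 \setminus Ker(T)$, for pseudo-projective maps $S$ and $T$. Since $\Gamma$ is discrete while the $\gamma_n$ are distinct, $S$ is not invertible, so $Ker(S)$ is either a point or a line. The crucial step is the inclusion $K \subset (\P^2 \setminus Ker(S)) \cap (\P^2 \setminus Ker(T))$. This follows from monotonicity of the equicontinuity set: as $\{\gamma_n : n \in \mathbb{N}\} \subset \Gamma$ and $\{\gamma_n^{-1} : n \in \mathbb{N}\} \subset \Gamma$, normality of $\Gamma$ on a neighborhood restricts to normality of each subfamily, whence $Eq(\Gamma) \subset Eq\{\gamma_n : n \in \mathbb{N}\}$ and $Eq(\Gamma) \subset Eq\{\gamma_n^{-1} : n \in \mathbb{N}\}$; Lemma \ref{delasuc2} then identifies these sets as $\P^2 \setminus Ker(S)$ and $\P^2 \setminus Ker(T)$ respectively. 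Consequently $K$ is a compact subset of the domains of both $S$ and $T$, so $\gamma_n \to S$ and $\gamma_n^{-1} \to T$ \emph{uniformly on $K$ itself}.

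Then I split into two cases according to the dimension of $Im(S)$. If $Im(S)$ is a single point $q$, then $S \equiv q$ on $K$, and uniform convergence gives $w_n = \gamma_n(k_n) \to q$; since $K$ is closed this yields $q \in K$, whereas $Ker(S)$ is a line, so part 3 of Lemma \ref{delasucesion} gives $Im(S) \subset Ker(T)$, i.e. $q \in Ker(T)$, contradicting $K \cap Ker(T) = \varnothing$. If instead $Im(S)$ is a line, then $Ker(S)$ is a point and part 2 of Lemma \ref{delasucesion} shows $Im(T) = Ker(S) =: p$ is a point; running the same argument for the inverses, $T \equiv p$ on $K$ forces $k_n = \gamma_n^{-1}(w_n) \to p$, hence $p \in K$, contradicting $K \cap Ker(S) = \varnothing$. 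Either way we reach a contradiction, so only finitely many translates of $K$ meet $K$. I expect the main obstacle to be precisely the inclusion $K \subset (\P^2 \setminus Ker(S)) \cap (\P^2 \setminus Ker(T))$: it is what upgrades the convergence on the open domains supplied by Lemma \ref{delasucesion} to genuine uniform convergence at the moving points $k_n, w_n \in K$, and it is the only place where the hypothesis $K \subset Eq(\Gamma)$ is actually used.
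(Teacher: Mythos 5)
Your proposal is correct and follows essentially the same route as the paper: the same contradiction setup, the same use of Lemma \ref{delasucesion} to extract the limits $S$ and $T$, and the same use of Lemma \ref{delasuc2} to get $K\cap Ker(S)=K\cap Ker(T)=\varnothing$ and force a limit point of the orbit into a kernel. The only cosmetic difference is that you split into cases on $\dim Im(S)$ and use the inverse sequence when $Im(S)$ is a line, whereas the paper treats both cases at once via $\kappa=S(k)\in Im(S)\subset Ker(T)\subset \P^2\setminus Eq(\Gamma)$.
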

\begin{proof}
By contradiction, let $K$ be a compact subset of $Eq(\Gamma)$ such
that there exists a sequence $(\gamma _n)$ of distinct elements of
$\Gamma$ that satisfy $\gamma_n (K) \cap K \ne \varnothing$. For
every $n \in \mathbb{N}$ there exists $k_n \in K$ such that $\gamma
_n (k_n) \in K$. Without loss of generality we can assume that $k_n
\underset{n \to \infty}{\longrightarrow} k \in K$ and $\gamma
_n(k_n) \underset{n \to \infty}{\longrightarrow} \kappa \in K$.
Without loss of generality, we can assume, by lemma
\ref{delasucesion} 1), that there exists a pseudo-projective map $S$
such that $\gamma _n \underset{n \to \infty}{\longrightarrow} S$
uniformly on compact subsets of $\P^2 \setminus Ker(S)$. Since $K
\subset Eq(\Gamma)$, the lemma \ref{delasuc2} implies that $K
\subset \P^2\setminus Ker(S)$. As\'i que $\gamma_n(k_n) \underset{n
\to \infty}{\longrightarrow} S(k) $ then $S(k)=\kappa \in K$ which
means that $\kappa \in Im(S)$.

The Lemma \ref{delasucesion} implies that there is a pseudo
projective map $T$ such that $Im(S) \subset Ker(T)$, and the Lemma
\ref{delasuc2} implies that $Ker(T) \subset \P^2 \setminus
Eq(\Gamma)$. Hence $\kappa \in Im(S) \subset Ker(T) \subset \P^2
\setminus Eq(\Gamma)$, which contradicts that $K \subset
Eq(\Gamma)$. Therefore $\Gamma$ acts properly and discontinuously on
$Eq(\Gamma)$.
\end{proof}

\begin{theorem}\label{dissubeq}
If $\Gamma \subset PSL(3, \C)$ is a discrete group, 
 and  $U$ is a $\Gamma$-invariant open subset of $\P^2$
such that $\P^2  \setminus U$ contains at least three complex lines
in general position, then $$U \subset Eq(\Gamma).$$
\end{theorem}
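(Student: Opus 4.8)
The plan is to reduce everything to a single claim about pseudo-projective limits and then verify that claim by a case analysis on the dimension of $Ker(S)$, using the $\Gamma$-invariance of the closed set $C := \P^2 \setminus U$ together with the inverse-limit information in Lemma~\ref{delasucesion}. Write $\ell_1,\ell_2,\ell_3 \subset C$ for the three lines in general position; since $C$ is $\Gamma$-invariant, $\gamma(\ell_i)\subset C$ and $\gamma^{-1}(\ell_i)\subset C$ for every $\gamma\in\Gamma$. Fix $z\in U$ and an open $V$ with $\overline V\subset U$ compact. I claim it suffices to prove that for \emph{every} sequence of distinct elements $(\gamma_n)\subset\Gamma$, after passing to the subsequence furnished by Lemma~\ref{delasucesion}(1) with $\gamma_n\to S$, one has $Ker(S)\cap U=\varnothing$. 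Indeed, granting this, $\overline V$ is a compact subset of $\P^2\setminus Ker(S)$, so $\gamma_n\to S$ uniformly on $\overline V\supset V$; as the sequence was arbitrary, $\{\gamma|_V:\gamma\in\Gamma\}$ is a normal family and $z\in Eq(\Gamma)$. So the whole theorem rests on the boxed claim $Ker(S)\cap U=\varnothing$. We may assume $S$ is non-invertible (otherwise $Ker(S)=\varnothing$ and there is nothing to prove), so by the dimension formula $Ker(S)$ is either a single point or a complex line, and correspondingly part 2) or part 3) of Lemma~\ref{delasucesion} applies and produces a pseudo-projective limit $T$ of $(\gamma_n^{-1})$.

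In the case where $Ker(S)$ is a point, $Im(S)$ is a line and Lemma~\ref{delasucesion}(2) gives $Ker(T)=Im(S)$ and $Im(T)=Ker(S)$. Since $Im(T)$ is a single point, $T$ is constant (equal to that point) on $\P^2\setminus Ker(T)$. Because the three lines are distinct, I can choose some $\ell_i\neq Im(S)$; then $\ell_i\setminus Im(S)=\ell_i\setminus Ker(T)$ is nonempty, and I pick a point $z_0$ in it. Then $z_0\in C$ and, by the convergence off $Ker(T)$, $\gamma_n^{-1}(z_0)\to T(z_0)=Ker(S)$. Each $\gamma_n^{-1}(z_0)$ lies in the closed $\Gamma$-invariant set $C$, so the limit point $Ker(S)$ lies in $C$, i.e. $Ker(S)\cap U=\varnothing$.

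In the case where $Ker(S)$ is a line, $Im(S)$ is a single point $q$. Here the hypothesis of three lines in general position is used essentially: since they have no common point, at least one $\ell_i$ does not pass through $q=Im(S)$. By the ``moreover'' part of Lemma~\ref{delasucesion}(3), the lines $\gamma_n^{-1}(\ell_i)$ converge to $Ker(S)$ in $(\P^2)^*$. Each $\gamma_n^{-1}(\ell_i)$ is a line contained in $C$, and the set of lines contained in the closed set $C$ is closed in $(\P^2)^*$: if lines $m_n\subset C$ converge to a line $m$, then each point of $m$ is a limit of points of the $m_n$, all of which lie in the closed set $C$, hence belongs to $C$. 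Therefore $Ker(S)\subset C$, giving again $Ker(S)\cap U=\varnothing$.

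Combining the two cases proves the boxed claim, and the reduction in the first paragraph then yields $U\subset Eq(\Gamma)$. The main obstacle is the line case: it is exactly where ``three lines in general position'' cannot be weakened to two (two lines in general position may both pass through the image point $q$), and where one must pass from convergence of points to convergence of lines in the dual plane and check that being contained in $C$ is a closed condition. The point case is comparatively routine but requires invoking the inverse limit $T$ rather than $S$ directly, since $S$ controls images, not the kernel one wants to locate.
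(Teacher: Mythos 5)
Your proposal is correct and follows essentially the same route as the paper: reduce to showing $Ker(S)\subset\P^2\setminus U$ for every pseudo-projective limit $S$ of a sequence in $\Gamma$, then treat the cases where $Ker(S)$ is a point or a line using parts 2) and 3) of Lemma~\ref{delasucesion} together with the $\Gamma$-invariance and closedness of $\P^2\setminus U$. You merely spell out two steps the paper leaves implicit (the normality reduction and the closedness of the set of lines contained in $\P^2\setminus U$), and you correct a small slip in the paper's point case, which writes $l_1\setminus Ker(S)$ where $l_1\setminus Ker(T)$ is meant.
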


\begin{proof}
It suffices to prove that $Ker(S) \subset \P^2 \setminus U ,$
whenever $S$ is a pseudo-projective map which is the limit of a
sequence $(\gamma _n)_{n \in \mathbb N} \subset \Gamma$.

If $Ker(S)$ is a point, denoted $p$, then $Im(S)$ is a complex line.
Moreover, there exists a pseudo-projective map $T$ such that $\gamma
_n ^{-1}$ goes to $T$, as $n \to \infty$, uniformly on compact
subset of $\P^2 \setminus Ker(T)$, such that $Ker(S)= Im(T)$ and
$Im(S)=Ker(T)$. There is a complex line $l_1 \ne Ker(T)$ such that
$l_1 \subset \P^2 \setminus U$. It follows that $\gamma _n ^{-1}(x)$
goes to $p$ as $n \to \infty$ for every point $x \in l_1 \setminus
Ker(S)$, which implies that $p \in \P^2 \setminus U$ (because $U$ is
a $\Gamma$-invariant open set).

If $Ker(S)$ is a complex line, then there exists a complex line
$l_1$ not passing through $Im(S)$ and contained in $\P^2 \setminus
U$, because $\P^2 \setminus U$ contains at least three lines in
general position. By the Lemma \ref{delasucesion} part iii), $\gamma
_n ^{-1} (l_1)$ goes to $Ker(S)$, as $n \to \infty$, which implies
that $Ker(S) \subset \P^2 \setminus U$.
\end{proof}

\begin{theorem}\label{eqeqdis}
Let $\Gamma \subset PSL(3, \C)$ be a discrete subgroup such that
$\Lambda(\Gamma)$ contains at least three lines in general position,
then $$\Omega(\Gamma)= Eq(\Gamma).$$
\end{theorem}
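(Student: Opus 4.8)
The plan is to establish the two inclusions $\Omega(\Gamma)\subset Eq(\Gamma)$ and $Eq(\Gamma)\subset\Omega(\Gamma)$ separately. The first inclusion is immediate from the results already in hand: the set $U=\Omega(\Gamma)=\P^2\setminus\Lambda(\Gamma)$ is open and $\Gamma$-invariant, and by hypothesis its complement $\Lambda(\Gamma)$ contains at least three complex lines in general position. Applying Theorem \ref{dissubeq} to this $U$ gives $\Omega(\Gamma)\subset Eq(\Gamma)$ at once. Note that the three-lines hypothesis is used only here.

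For the reverse inclusion I would show that $Eq(\Gamma)$ is disjoint from each of $L_0(\Gamma)$, $L_1(\Gamma)$ and $L_2(\Gamma)$; since $\Lambda(\Gamma)$ is their union this yields $Eq(\Gamma)\subset\P^2\setminus\Lambda(\Gamma)=\Omega(\Gamma)$. Throughout I would use that $Eq(\Gamma)$ is open and $\Gamma$-invariant and that, by Theorem \ref{gamactsdisconeq}, $\Gamma$ acts properly discontinuously on it. For $L_0(\Gamma)$: proper discontinuity forces every point of $Eq(\Gamma)$ to have finite isotropy, so each point with infinite isotropy lies in the closed set $\P^2\setminus Eq(\Gamma)$, and hence so does its closure $L_0(\Gamma)$. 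For $L_1(\Gamma)$: if $q=\lim_m\gamma_m(z)$ is a cluster point of the orbit of some $z$ with distinct $\gamma_m$ and one assumes $q\in Eq(\Gamma)$, then openness and $\Gamma$-invariance of $Eq(\Gamma)$ force $\gamma_m(z)\in Eq(\Gamma)$ for large $m$ and hence $z\in Eq(\Gamma)$; the compact set $\{z,q\}\cup\{\gamma_m(z):m\ge M\}\subset Eq(\Gamma)$ then meets infinitely many of its own translates, contradicting proper discontinuity. Thus the cluster points lie off $Eq(\Gamma)$, and taking closures gives $L_1(\Gamma)\cap Eq(\Gamma)=\varnothing$.

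The main point is $L_2(\Gamma)$, which I would control with Lemma \ref{cuasiminimalidad} applied to the closed set $C=\P^2\setminus Eq(\Gamma)$. The hypothesis to verify is that for every compact $K\subset\P^2\setminus C=Eq(\Gamma)$ the cluster points of the family $\{\gamma(K)\}_{\gamma\in\Gamma}$ lie in $L_0(\Gamma)\cup L_1(\Gamma)$. Given such a cluster point, choose $k_m\in K$ and distinct $\gamma_m$ with $\gamma_m(k_m)\to q$, and pass to subsequences so that $k_m\to k\in K$ and $\gamma_m\to S$ for a pseudo-projective map $S$. Since the restriction of a normal family remains normal, $Eq(\Gamma)\subset Eq\{\gamma_m:m\in\mathbb N\}$, which equals $\P^2\setminus Ker(S)$ by Lemma \ref{delasuc2}; hence the compact set $K$ avoids $Ker(S)$, the convergence is uniform on $K$, and $q=S(k)\in Im(S)$. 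As $k\in Eq(\Gamma)$ it is not in $L_0(\Gamma)$ by the previous step, and $\gamma_m(k)\to S(k)=q$ exhibits $q$ as a cluster point of the orbit of $k$, so $q\in L_1(\Gamma)$. Lemma \ref{cuasiminimalidad} then gives $L_2(\Gamma)\subset\P^2\setminus Eq(\Gamma)$. Combining the three steps yields $Eq(\Gamma)\subset\Omega(\Gamma)$, and together with the first inclusion the equality follows. The only delicate point is the $L_2$ argument, where one must rule out that limits of orbit points of a compact subset of $Eq(\Gamma)$ escape into the kernel $Ker(S)$, which is exactly what Lemma \ref{delasuc2} prevents.
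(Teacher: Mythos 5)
Your proposal is correct and follows essentially the same route as the paper: the inclusion $\Omega(\Gamma)\subset Eq(\Gamma)$ via Theorem \ref{dissubeq}, and the reverse inclusion by showing $Eq(\Gamma)$ misses $L_0$, $L_1$ (via proper discontinuity from Theorem \ref{gamactsdisconeq}) and $L_2$ (via Lemma \ref{cuasiminimalidad} together with Lemma \ref{delasuc2} to keep the compact set away from $Ker(S)$). The only difference is that you spell out the $L_0$ and $L_1$ arguments that the paper compresses into a citation of Theorem \ref{gamactsdisconeq}.
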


\begin{proof}
By Theorem \ref{gamactsdisconeq} we have that $L_0(\Gamma) \cap
Eq(\Gamma)= \varnothing = L_1(\Gamma) \cap Eq(\Gamma)$. Now we prove
that $L_2(\Gamma) \cap Eq(\Gamma)= \varnothing$. Let us denote by
$C$ the complement of $Eq(\Gamma)$ in $\P^2$, and let $K$ be a
compact subset of  $\P^2 \setminus C = Eq(\Gamma)$. By Lemma
\ref{cuasiminimalidad}, it suffices to prove that the accumulation
points of the orbit of $K$ are contained in $L_0 (\Gamma) \cup
L_1(\Gamma)$. Let $x$ be one of such accumulation points, then there
exists a sequence, $k_n$, of points in $Eq(\Gamma)$ such that $k_n
\to k \in Eq(\Gamma)$ as $n \to \infty$, and there is a sequence
$\gamma _n$ of different elements in $\Gamma$, such that $\gamma _n
(k_n) \to x$, as $n \to \infty$. By the Lemma \ref{delasucesion} we
can assume that there exists a pseudo-projective map $S$ such that
$\gamma _n$ goes to $S$, as $n \to \infty$, uniformly on compact
subsets of $\P^2  \setminus Ker(S)$. We have two cases according to
whether $Ker(S)$ is a point or a complex line.

If $Ker(S)$ is a complex line, then the Lemma \ref{delasucesion}
iii) implies that $\{k_n : n \in \mathbb N\} \cup \{k\} \subset
Eq(\Gamma) \subset \P^2 \setminus Ker(S)$. Thus, $\gamma_n(k_n) \to
x$ implies that $\gamma _n(k) \to S(k)=x$, and given that $k \in
Eq(\Gamma) \subset \P^2 \setminus L_0 (\Gamma)$, we deduce that $x
\in L_1(\Gamma)$.

If $Ker(S)$ is a point then $(\{k_n\} \cup \{k\}) \cap Ker(S)=
\varnothing$ because $\{k_n\} \cup \{k\} \subset Eq(\Gamma)$, and
the proof follows as in the former case.

Therefore $L_2(\Gamma) \cap Eq(\Gamma)= \emptyset$, then $Eq(\Gamma)
\subset \Omega(\Gamma)$. The proof of the reverse inclusion follows
easily from the Theorem \ref{dissubeq}.
\end{proof}

\section{$\Lambda(\Gamma)$ is union of complex lines}
\label{lambdaunlin} In this section $\Gamma \subset PSL(3, \C)$ is a
Complex Kleinian Group, and $C(\Gamma)$ denotes the set
$$C(\Gamma) = \overline{ \bigcup _{\gamma \in \Gamma} \Lambda(\gamma)}$$


\begin{lemma}\label{CsubLambda}
If $\Lambda(\Gamma)$ has at least three complex lines in general
position, then $\Lambda(\gamma) \subset \Lambda(\Gamma)$ for every
$\gamma \in \Gamma$. In particular, $C(\Gamma)\subset
\Lambda(\Gamma)$.
\end{lemma}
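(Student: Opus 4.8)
The plan is to prove the pointwise statement $\Lambda(\gamma)\cap Eq(\Gamma)=\varnothing$ for every $\gamma\in\Gamma$ and then translate it into the asserted inclusion. The translation is free: the hypothesis is exactly the hypothesis of Theorem \ref{eqeqdis}, so $Eq(\Gamma)=\Omega(\Gamma)=\P^2\setminus\Lambda(\Gamma)$, whence $\P^2\setminus Eq(\Gamma)=\Lambda(\Gamma)$ and $\Lambda(\gamma)\cap Eq(\Gamma)=\varnothing$ reads precisely $\Lambda(\gamma)\subset\Lambda(\Gamma)$. Moreover, a subfamily of a normal family is normal, so the equicontinuity set can only shrink as the family grows; since $\langle\gamma\rangle\subset\Gamma$ this gives $Eq(\Gamma)\subset Eq(\langle\gamma\rangle)$. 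Hence it suffices to prove the a priori stronger statement $Eq(\langle\gamma\rangle)\subset\P^2\setminus\Lambda(\gamma)$ for each individual $\gamma$.

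First I would dispose of the elliptic case: in a discrete group every elliptic element has finite order, so $\Lambda(\gamma)=\varnothing$ and there is nothing to prove. For a non-elliptic $\gamma$ (necessarily of infinite order) I would study the two sequences $(\gamma^{n})_{n\ge 1}$ and $(\gamma^{-n})_{n\ge 1}$. By Lemma \ref{delasucesion} each has a subsequence converging, uniformly on compact subsets of the complement of the relevant kernel, to pseudo-projective maps $S_{+}$ and $S_{-}$, respectively. The core computation is to read off $Ker(S_{\pm})$ from the Jordan canonical form of a lift of $\gamma$ listed in Section 1: normalising $\gamma^{\pm n}$ by its entry of largest modulus and letting $n\to\infty$ produces a rank-one (or rank-two) limit matrix whose kernel is, in every case, one of the lines, or the isolated fixed point, making up $\Lambda(\gamma)$. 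Running through the parabolic, complex-homothety, screw, loxoparabolic and strongly loxodromic types, one finds in each case that $Ker(S_{+})\cup Ker(S_{-})=\Lambda(\gamma)$.

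With the kernels identified, Lemma \ref{delasuc2} applies to each convergent subsequence and yields that its equicontinuity set is $\P^2\setminus Ker(S_{+})$, respectively $\P^2\setminus Ker(S_{-})$. Since $Eq(\langle\gamma\rangle)$ is contained in the equicontinuity set of any subfamily, we obtain $Eq(\langle\gamma\rangle)\subset(\P^2\setminus Ker(S_{+}))\cap(\P^2\setminus Ker(S_{-}))=\P^2\setminus\Lambda(\gamma)$, which is what was needed. Combining with the reduction above, $\Lambda(\gamma)\subset\Lambda(\Gamma)$ for every $\gamma$, so $\bigcup_{\gamma\in\Gamma}\Lambda(\gamma)\subset\Lambda(\Gamma)$; as $\Lambda(\Gamma)$ is closed, taking closures gives $C(\Gamma)=\overline{\bigcup_{\gamma}\Lambda(\gamma)}\subset\Lambda(\Gamma)$.

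The main obstacle I anticipate is the case analysis for the two \emph{unimodular} classes, namely the screws and the ellipto-parabolic elements, where the normalised powers do not converge outright but only along subsequences, because factors such as $(\mu/\lambda)^{n}$ or $e^{2\pi i n t}$ rotate on the unit circle. The point to make carefully there is that every subsequential limit has the \emph{same} kernel: the rotating factors affect only the nonzero entries of the limit matrix, not which coordinates are annihilated, and the growth that pins down the kernel comes either from the off-diagonal Jordan entry or from a strictly dominant modulus, never from the rotating factors. Thus the inclusion $Eq(\langle\gamma\rangle)\subset\P^2\setminus Ker(S_{\pm})$ is unaffected, and the argument goes through uniformly across all non-elliptic types.
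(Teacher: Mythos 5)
Your proposal is correct, and there is no circularity: Theorem \ref{eqeqdis} is established in Section \ref{psprojective} independently of this lemma, so you may freely use $Eq(\Gamma)=\Omega(\Gamma)$ to convert $\Lambda(\gamma)\cap Eq(\Gamma)=\varnothing$ into $\Lambda(\gamma)\subset\Lambda(\Gamma)$. However, your route is genuinely different from the paper's. The paper argues case by case over the Jordan types, exhibiting for each type explicit sequences of points $a_n\to a$ with $\gamma^n(a_n)$ accumulating on $\Lambda(\gamma)$, or sequences of lines $\gamma^{\pm n}(l)$ for auxiliary lines $l\subset\Lambda(\Gamma)$; it invokes the three-lines-in-general-position hypothesis repeatedly \emph{inside} the cases (to produce a line of $\Lambda(\Gamma)$ avoiding a prescribed point) and even a lambda-lemma in the strongly loxodromic case. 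You instead use the hypothesis exactly once (to apply Theorem \ref{eqeqdis}) and reduce everything to the monotonicity $Eq(\Gamma)\subset Eq(\langle\gamma\rangle)\subset Eq(\{\gamma^{n_k}\})$ together with Lemma \ref{delasuc2}, so that the only case analysis left is the verification that $Ker(S_+)\cup Ker(S_-)=\Lambda(\gamma)$ for the normalized limits of $\gamma^{\pm n}$ --- a computation I checked for all five non-elliptic types (including the unimodular screw and ellipto-parabolic cases, where your observation that all subsequential limits share the same kernel is the right point to make). Your argument is shorter and more conceptual, and it isolates a reusable fact ($\Lambda(\gamma)$ equals the union of the kernels of the limit pseudo-projective maps of $\gamma^{\pm n}$) that is close in spirit to Lemma \ref{lineatractora}; the paper's proof buys explicitness in coordinates and works directly with cluster points of orbits rather than through the equicontinuity machinery. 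The one thing you should not leave implicit in a written version is the kernel computation itself, since the whole weight of the argument rests on it.
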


\begin{proof}
Let $\gamma$ be an element in $\Gamma$.

There are three cases depending on whether $\gamma$ is elliptic,
parabolic or loxodromic (\cite{Na2})

(1) If $\gamma$ elliptic then $\Lambda(\gamma)=\varnothing$ and the
result follows immediately.

(2) If $\gamma$ es parabolic, then we have three subcases:

\quad (i) We can assume, without loss of generality, that $\gamma$
has a lift given by the matrix.
\begin{displaymath}
\left(
\begin{array}{ccc}
1 & 1 & 0 \\
0 & 1 & 0 \\
0 & 0 & 1
\end{array}
\right) ,
\end{displaymath}

then $\Lambda(\gamma)=L_0(\gamma) = \overleftrightarrow{e_1, e_2}
\subset \L_0(\Gamma) \subset \Lambda(\Gamma)$.

\quad (ii) We can assume, without loss of generality, that $\gamma$
has a lift $\tilde{\gamma}$ which has the form
\begin{displaymath}
\left(
\begin{array}{ccc}
1 & 1 & 0 \\
0 & 1 & 1 \\
0 & 0 & 1
\end{array}
\right) .
\end{displaymath}
 It follows that, for any $n \in \mathbb N$,
\begin{displaymath}
\tilde{\gamma} ^n =\left(
\begin{array}{ccc}
1 & n & \frac{n(n-1)}{2} \\
0 & 1 & n \\
0 & 0 & 1
\end{array}
\right) .
\end{displaymath}
If $\Lambda(\gamma)= \Flecha{e_1 , e_2}$ is not contained in
$\Lambda(\Gamma)$, then there exists $z \in \C$ such that $[z:1:0]
\in \Omega(\Gamma)$. Let $w$ be any complex number, then the
sequence $a_n(w):= [z:1: \frac{2(w-n)}{n(n-1)}]$, $n \in \mathbb N$
tends to $[z:1:0]$ as $n \to \infty$. Thus, for $N>0$ large enough
($N$ depends on $w$), we have that $(a_n(w)) _{n \geqslant N}
\subset \Omega(\Gamma)$ for all $n >N$. We note that
$$\gamma ^n (a_n)=[z+ w : \frac{2w-(n+1) }{n-1} : \frac{2(w-n)}{n(n-1)}]
\underset{n \to \infty}{\longrightarrow} [-z -w: 1:0].$$ Hence, $[-z
-w: 1:0] \in \Lambda(\Gamma)$, and taking $w=-2z$ we obtain a
contradiction. Therefore, $\Lambda(\gamma) \subset \Lambda(\Gamma)$.

\quad (iii) We can assume, without loss of generality, that $\gamma$
has a lift $\tilde{\gamma}$ of the form:
\begin{displaymath}
\tilde{\gamma}  = \left(
\begin{array}{ccc}
\lambda & 1 & 0 \\
0 & \lambda & 0 \\
0 & 0 & \lambda ^{-2}
\end{array}
\right) , \quad |\lambda|=1.
\end{displaymath}
We proceed by contradiction. Let's assume that $\Lambda(\gamma)=
\Flecha{e_1, e_3}$ is not contained in $\Lambda(\Gamma)$, then there
exists $z \in \C$ such that $[z:0:1] \in \Omega(\Gamma)$. We note
that $a_n:= [z:1/n^2 : 1] \underset{n \to \infty}{\longrightarrow}
[z:0:1]$, so that for $N>0$ large enough, we have that $(a_n) _{n
\geqslant N} \subset \Omega(\Gamma)$ for all $n
>N$. The sequence,
$$\gamma ^n (a_n)=[z \lambda ^n+ \frac{n \lambda ^{n-1}}{n^2} : \frac{\lambda ^n}{n^{2}} : \lambda ^{-2n}]
=[z \lambda ^{3n}+ \frac{\lambda ^{3n-1}}{n}: \frac{\lambda
^{3n}}{n^2}:1], \quad n >N,$$ has cluster point $[z:0:1]$ which
implies that $[z:0:1] \in \Lambda(\Gamma)$, a contradiction.

(3) If $\gamma$ is loxodromic, we have four subcases:

\quad (i) If $\gamma$ has a lift $\tilde{\gamma}$ whose normal
Jordan form is:
\begin{displaymath}
\left(
\begin{array}{ccc}
\lambda _1 & 0 & 0 \\
0 & \lambda _2 & 0 \\
0 & 0 &  \lambda _3
\end{array}
\right) , \quad |\lambda _1|<|\lambda _2|<|\lambda _3|,
\end{displaymath}
then we can assume that $\tilde \gamma$ is such matrix. It follows
that $\Lambda(\gamma)= \Flecha{e_1, e_2} \cup \Flecha{e_2, e_3}$,
and the lambda-lemma implies that
$$\Flecha{e_1,e_2} \subset \Lambda(\Gamma) \,  \textrm{ \'o } \,
\Flecha{e_2, e_3} \subset \Lambda(\Gamma).$$ Let's assume, without
loss of generality, that $ \Flecha{e_1,e_2} \subset
\Lambda(\Gamma)$, then there exists $l \subset \Lambda(\Gamma)$ such
that $l$ does not pass through the point $[1:0:0]$ (because
$\Lambda(\Gamma)$ contains at least three complex lines in general
position). We note that the sequence of complex lines $\gamma ^n
(l)$ goes to the complex line $\Flecha{e_2, e_3}$ as $n \to \infty$,
therefore $\Flecha{e_2, e_3}\subset \Lambda(\Gamma)$.

\quad (ii) If $\gamma$ has a lift $\tilde{\gamma}$ whose Jordan
normal form is
\begin{displaymath}
\left(
\begin{array}{ccc}
\lambda _1 & 0 & 0 \\
0 & \lambda _1 & 0 \\
0 & 0 &  \lambda _1 ^{-2}
\end{array}
\right) , \quad |\lambda _1| \ne 1,
\end{displaymath}
then we can assume that $\tilde \gamma$ is such matrix. In this case
$\Lambda (\gamma)=L_0(\gamma) \subset L_0(\Gamma) \subset
\Lambda(\Gamma)$.

\quad (iii) If $\gamma$ has a lift $\tilde{\gamma}$ whose Jordan
normal form is
\begin{displaymath}
\left(
\begin{array}{ccc}
\lambda _1 & 0 & 0 \\
0 & \lambda _2 & 0 \\
0 & 0 &  \lambda _3
\end{array}
\right) , \quad |\lambda _1| = |\lambda _2| < |\lambda _3|,
\end{displaymath}
then we can assume that $\tilde \gamma$ is such  matrix, so that
$$\Lambda(\gamma)= \Flecha{e_1, e_2} \cup \{[0:0:1]\}.$$
Let $l$ be a complex line contained in $\Lambda(\Gamma)$ not passing
through $[0:0:1]$ then $\gamma ^{-n} (l)$ is a sequence of complex
lines that tends to the complex line $\Flecha{e_1, e_2}$ as $n$
tends to $\infty$, which implies that $\Flecha{e_1, e_2} \subset
\Lambda(\Gamma)$. Moreover, $\gamma ^n(z) \underset{n \to
\infty}{\longrightarrow} [0:0:1]$ whenever  $z \in P^2 _\C \setminus
\Flecha{e_1, e_2}$. Therefore $[0:0:1] \in \Lambda(\Gamma)$.

\quad (iv) If $\gamma$ has a lift $\tilde{\gamma}$ whose Jordan
normal form is
\begin{displaymath}
\left(
\begin{array}{ccc}
\lambda  & 1 & 0 \\
0 & \lambda  & 0 \\
0 & 0 &  \lambda  ^{-2}
\end{array}
\right) , \quad |\lambda | < 1,
\end{displaymath}
then we can assume that $\tilde \gamma$ is such matrix, in this
case, $\Lambda(\gamma)=\Flecha{e_1, e_2} \cup \Flecha{e_1,e_3}$. We
have that
\begin{displaymath}
\tilde{\gamma} ^n= \left(
\begin{array}{ccc}
\lambda ^n  & n \lambda ^{n-1} & 0 \\
0 & \lambda ^n & 0 \\
0 & 0 &  \lambda  ^{-2n}
\end{array}
\right) ,  n \in \mathbb N .
\end{displaymath}
Let's assume that $\Flecha{e_1, e_2}$ is not contained in
$\Lambda(\Gamma)$, then there exists $z \in \C$ such that $[z:1:0]
\in \Omega(\Gamma)$. If $w$ is a non-zero complex number, then the
sequence $[wz:w: n\lambda ^{3n-1}]$ tends to $[z:1:0]$ as $n \to
\infty$. Moreover $$ \gamma ^n([z:1: \frac{n \lambda ^{3n-1}}{w}])
\underset{n \to \infty}{\longrightarrow} [w:0:1],
$$
which implies that the complex line $\Flecha{e_1, e_3}$ is contained
in $\Lambda(\Gamma)$.

Now, assuming $\Flecha{e_1, e_2} \subset \Lambda(\Gamma)$, let $l$
be a complex line different from $\Flecha{e_1, e_2}$, which is
contained in $\Lambda(\Gamma)$ and does not pass through $[0:0:1]$,
then $l$ has an equation of the form
$$Az_1 +Bz_2 +Cz_3 =0, \quad |A|^2+|B|^2 \ne 0, C \ne 0.$$
It is not hard to check that $\gamma ^n (l)$, $n \in \mathbb N$ has
an equation of the same kind, with coefficients $(A'(n), B'(n),
C'(n))$ given by the equation
\begin{displaymath}
\left(
\begin{array}{c}
A'(n) \\
B'(n)\\
C'(n)
\end{array}
\right) =\left( \left(
\begin{array}{ccc}
\lambda ^n  & n \lambda ^{n-1} & 0 \\
0 & \lambda ^n & 0 \\
0 & 0 &  \lambda  ^{-2n}
\end{array}
\right) ^{T} \right) ^{-1}  \left(
\begin{array}{c}
A \\
B\\
C
\end{array}
\right)
\end{displaymath}

Thus, $(A'(n), B'(n), C'(n))=(\lambda ^{-n} A, \lambda ^{-n}B -n
\lambda^{-n-1}A, \lambda ^{2n} C)$, in other words, $$(A'(n), B'(n),
C'(n))=( A, B -n \lambda^{-1}A, \lambda ^{3n} C),$$ it follows that
$\gamma ^n (l) \underset{n \to \infty}{\longrightarrow} \Flecha{e_1,
e_3}$, which implies that $\Flecha{e_1, e_3} \subset
\Lambda(\Gamma)$.

If $\Flecha{e_1, e_3} \subset \Lambda(\Gamma)$, let us take a line
$l$ contained in $\Lambda(\Gamma)$ not passing through $[0:0:1]$,
then $l$ has an equation of the form
$$Az_1 +Bz_2 +Cz_3 =0, \quad  C \ne 0.$$
It is not hard to check that $\gamma ^{-n} (l)$, $n \in \mathbb N$
has an equation of the same kind, whose coefficients $(A'(-n),
B'(-n), C'(-n))$ are given by the equation
\begin{displaymath}
\left(
\begin{array}{c}
A'(-n) \\
B'(-n)\\
C'(-n)
\end{array}
\right) = \left(
\begin{array}{ccc}
\lambda ^n  & 0 & 0 \\
n \lambda ^{n-1} & \lambda ^n & 0 \\
0 & 0 &  \lambda  ^{-2n}
\end{array}
\right)   \left(
\begin{array}{c}
A \\
B\\
C
\end{array}
\right) .
\end{displaymath}

Thus $(A'(-n), B'(-n), C'(-n))=(\lambda ^n A ,n \lambda ^{n-1}A +
\lambda ^n B, \lambda ^{-2n}C)$, equivalently, $$(A'(-n), B'(-n),
C'(-n))=(\lambda ^{3n} A ,n \lambda ^{3n-1}A + \lambda ^{3n} B,
C),$$ since $|\lambda|<1$ we have that $\gamma ^{-n}(l) \underset{n
\to \infty}{\longrightarrow} \Flecha{e_1, e_2}$, which implies that
$\Flecha{e_1, e_2} \subset \Lambda(\Gamma)$.
\end{proof}


\begin{proof}[Proof of Theorem \ref{main2}]

First, we notice that Theorem \ref{dissubeq} implies that $\P^2
\setminus C(\Gamma)\subset Eq(\Gamma)$, because $\P^2 \setminus
C(\Gamma)$ is a $\Gamma$-invariant open set and $C(\Gamma)$ contains
at least three complex lines in general position. It follows from
Theorem \ref{eqeqdis} that  $\P^2 \setminus C(\Gamma)\subset
Eq(\Gamma)= \Omega(\Gamma)$. Therefore $\Gamma$ acts properly and
discontinuously on $\P^2 \setminus C(\Gamma)$ and $\Lambda(\Gamma)
\subset C(\Gamma)$.

Given that $\Lambda(\Gamma)$ contains at least three complex lines
in general position, the Lemma \ref{CsubLambda} implies that
$C(\Gamma)\subset \Lambda(\Gamma)$.

In order to prove that $C(\Gamma)$ is a union of complex lines, it
suffices to check that for every $p \in \cup _{\gamma \in \Gamma}
\Lambda (\gamma)$, there exists a complex line contained in
$\Lambda(\Gamma)$ passing through $p$. Hence, let us assume that $p
\in \Lambda(\gamma)$ for some $\gamma \in \Gamma$. We need only to
consider the case when $\Lambda(\gamma)$ consists of one complex
line $l$ and one point $q \notin l$. If $p \in l$ then there is
nothing to prove. If $p=q$ then there exists a complex line $l_1
\subset \Lambda(\Gamma)$, $l_1 \ne l$, so when  $n \to \infty$,
$\gamma ^n(l_1)$ or $\gamma ^{-n}(l_1)$ goes to a complex line
passing through $p$ and contained $\Lambda(\Gamma)$.

Since $C(\Gamma)$ contains three complex lines in general position
and $\Gamma$ has no fixed point, there exist three complex lines
$l_1, l_2, l_3$ and three elements $\gamma _1, \gamma _2, \gamma _3
\in \Gamma$ such that $l_1 \subset \Lambda(\gamma _1)$, $l_2 \subset
\Lambda(\gamma _2)$, $l_3 \subset \Lambda(\gamma _3)$. Now, let $U$
be a $\Gamma$-invariant open subset of $\P^2$ on which $\Gamma$ acts
properly and discontinuously.

If $\gamma _1, \gamma _2, \gamma _3$ are pairwise different, then,
by Theorem \ref{polloteo}, we can assume $l_1, l_2, l_3\subset
\P^2\setminus U$. It follows from Theorems \ref{dissubeq} and
\ref{eqeqdis} that $U \subset Eq(\Gamma)=\Omega(\Gamma)$.

If $\gamma _1, \gamma _2, \gamma _3$ are not pairwise different,
then without loss of generality we can assume that $\gamma _1 \ne
\gamma _2 = \gamma _3$. By Theorem \ref{polloteo} we can assume that
$l_1 \subset P^2 \setminus U$ and at least one of the lines $l_2$ or
$l_3$ is contained in $\P^2 \setminus U$, therefore $\P^2 \setminus
U$ contains at least two complex lines in general position. If there
were not more than two complex lines in general position contained
in $\P^2 \setminus U$, then the intersection point of $l_1$ and
$l_2$ or the intersection point of $l_1$ and $l_3$ would be a
$\Gamma$-fixed point, and it cannot happen. Therefore $\P^2
\setminus U$ contains at least three lines in general position, and
it follows from Theorems \ref{dissubeq} and \ref{eqeqdis} that $U
\subset Eq(\Gamma)=\Omega(\Gamma)$.
\end{proof}

In what follows, we only consider groups $\Gamma \subset PSL(3, \C)$
whose action on $\P^2 $ have no fixed point nor invariant lines.

\begin{lemma} \label{3lineslema}
If $\Gamma \subset PSL(3, \C)$ is a infinite discrete group, having
no fixed points nor invariant lines then  $ C(\Gamma)\cap
\Lambda(\Gamma)$ contains at least three complex lines in general
position.
\end{lemma}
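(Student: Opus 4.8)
The plan is to build the three lines out of a single $\Gamma$-orbit of one carefully chosen line, and then read off ``general position'' from the hypotheses that $\Gamma$ has neither a fixed point nor an invariant line. First I would produce one line inside $C(\Gamma)\cap\Lambda(\Gamma)$: since $\Gamma$ is infinite, Theorem \ref{polloteo} furnishes an element $\gamma_0\in\Gamma$ of infinite order together with a complex line $l_0$ satisfying $l_0\subset\Lambda(\gamma_0)$ and $l_0\subset\Lambda(\Gamma)$. Because $\Lambda(\gamma_0)\subset C(\Gamma)$ directly from the definition $C(\Gamma)=\overline{\bigcup_{\gamma\in\Gamma}\Lambda(\gamma)}$, this already gives $l_0\subset C(\Gamma)\cap\Lambda(\Gamma)$.

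Next I would spread this line over its orbit. Both $\Lambda(\Gamma)$ and $C(\Gamma)$ are $\Gamma$-invariant: the former as a Kulkarni limit set, and the latter because $\delta\,\Lambda(\gamma)=\Lambda(\delta\gamma\delta^{-1})$ shows that $\bigcup_{\gamma}\Lambda(\gamma)$ is invariant, hence so is its closure. Consequently every line of the orbit $\mathcal{O}=\{\gamma(l_0):\gamma\in\Gamma\}$ lies in $C(\Gamma)\cap\Lambda(\Gamma)$, and it remains only to exhibit three of them in general position. Since $\Gamma$ has no invariant line, $l_0$ is not fixed by all of $\Gamma$, so $\mathcal{O}$ contains at least two distinct lines; let $p$ be their (unique) intersection point.

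The crux --- and the step I expect to be the main obstacle --- is showing that the lines of $\mathcal{O}$ are not all concurrent, for which I would invoke the absence of global fixed points. Suppose to the contrary that every line of $\mathcal{O}$ passed through one common point $q$. For each $\delta\in\Gamma$ and each $m\in\mathcal{O}$, the line $\delta(m)$ again belongs to $\mathcal{O}$, hence passes through $q$, while it also passes through $\delta(q)$; thus every line of $\mathcal{O}$ would contain both $q$ and $\delta(q)$. If $\delta(q)\neq q$, these two distinct points would determine all such lines and force them to coincide, contradicting the presence of two distinct lines in $\mathcal{O}$. Hence $\delta(q)=q$ for every $\delta\in\Gamma$, making $q$ a global fixed point, which is excluded.

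Finally I would assemble the triangle: take the two distinct orbit lines meeting at $p$, and, using the previous step (which forbids all of $\mathcal{O}$ from passing through $p$), a third orbit line $m$ with $p\notin m$. These three lines are pairwise distinct and share no common point, i.e. they are in general position, and all three lie in $C(\Gamma)\cap\Lambda(\Gamma)$, proving the lemma. Apart from the concurrency argument, every step is routine bookkeeping about $\Gamma$-invariance together with a single incidence fact in $\P^2$.
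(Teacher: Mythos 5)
Your proof is correct and follows essentially the same route as the paper's: Theorem \ref{polloteo} supplies the first line, the absence of an invariant line supplies a second orbit line, and the absence of a global fixed point rules out concurrency of the orbit, yielding a third line forming a triangle. Your explicit justification that concurrency of the orbit forces a global fixed point (via $\delta(q)=q$) is a slightly more detailed version of the step the paper states in one sentence, and your use of $\Gamma$-invariance of $C(\Gamma)$ is equivalent to the paper's observation that $\gamma(l_0)\subset\Lambda(\gamma\gamma_0\gamma^{-1})$.
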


\begin{proof}
Given that $\Gamma \subset PSL(3, \C)$ is an infinite and discrete
group, by Theorem \ref{polloteo}, there exists an infinite order
element  $\gamma_0 \in \Gamma$, and one complex line, $l_0$, such
that $l_0 \subset \Lambda(\gamma _0) \subset C(\Gamma)$, and $l_0
\subset \Lambda(\Gamma)$. Since $l_0$ is not $\Gamma$-invariant,
there exists an element $\gamma _1 \in \Gamma$ such that $l_1
:=\gamma _1 (l_0)\ne l_0$ (we notice that $l_1 \subset
\Lambda(\gamma _1 \gamma _0 \gamma _1 ^{-1}) \subset C(\Gamma)$).
Let $q_0$ be the intersection point of $l_0$ and $l_1$. If $\gamma
(l_0)$ passes through $q_0$ for every $\gamma \in \Gamma$, then
$q_0$ is a $\Gamma$-fixed point, which cannot happen. Thus, there
exists $\gamma _2 \in \Gamma$ such that $l_2:=\gamma_2 (l_0)$, $l_1$
and $l_0$ are in general position, and we see that $l_2 \subset
\Lambda(\gamma _2 \gamma _0 \gamma _2 ^{-1} ) \subset C(\Gamma)$.
\end{proof}

\begin{example} Let $\alpha$ and $\beta$ in $PSL(3, \C)$ be the elements
induced, respectively, by  the matrices
\begin{displaymath}
A= \left(
\begin{array}{ccc}
\frac{1}{2} & 0 &0 \\
0 & 1 & 0 \\
0 & 0 & 2
\end{array}
\right), \quad \quad B= \left(
\begin{array}{ccc}
0 & 0 & 1 \\
1 & 0 & 0 \\
0 & 1 & 0
\end{array}
\right) .
\end{displaymath}

Let  $G$ be the group $\langle A , B \rangle \subset GL(3, \C)$, and
$\Gamma = \langle \alpha , \beta \rangle \subset PSL(3, \C)$, then
$\Gamma$ is a discrete group, it has no fixed points nor invariant
lines and $\Lambda(\Gamma)= C(\Gamma) = \Flecha{e_1, e_2} \cup
\Flecha{e_2, e_3} \cup \Flecha{e_3, e_1}$.
\end{example}

\begin{proof}
a) The group $G$ is a discrete, so $\Gamma$ is a discrete group. In
order to proof this statement, we use the norm in $\{ \mathbf{g} \in
GL(3, \C): \det \mathbf{g} = \pm 1\}$ given by $\|\mathbf{g}\|= \max
{|g_{i,j}|}$. We notice that each element in $G$ is given by one of
the following matrices:
\begin{displaymath}
\left(
\begin{array}{ccc}
2^{m_1} & 0 & 0 \\
0 & 2^{m_2} & 0 \\
0 & 0 & 2^{m_3}
\end{array}
\right), \quad \left(
\begin{array}{ccc}
 0 & 0 & 2^{m_1} \\
2^{m_2} & 0 & 0 \\
0 &  2^{m_3} & 0
\end{array}
\right), \quad
 \left(
\begin{array}{ccc}
0 & 2^{m_1} & 0 \\
0 & 0 & 2^{m_2} \\
2^{m_3} & 0 & 0
\end{array}
\right),
\end{displaymath}
where $m_1, m_2, m_3 \in \mathbb Z$ and $m_1+m_2+m_3 =0$, because
the set $\mathbf{H}$ of such matrices, is a group, and $A \in
\mathbf{H}$, $B \in \mathbf{H}$. Since $G \subset \mathbf{H}$, it
suffices to prove that $\mathbf{H}$ is a discrete group, and , for
this, we only need to check that, for every $k>0$, the set $\{
\mathbf{h} \in \mathbf{H} \, | \, \| \mathbf{h} \| < k \}$ is
finite. Let us assume that $\| h \| < k$ and, without loss of
generality, $m_1 = \max\{m_1, m_2 , m_3\}>0$, then
$$0 < m_1 < \log _2 k,$$
$$-\log _2 k < m_2 + m_3 < 2 \log _2 k.$$
We have two cases according to whether $m_2$ and $m_3$ are both
negative or $m_1 m_2 <0$. In any case, there are finitely many
values of $m_1, m_2, m_3$ under these conditions.

(b) The group $\Gamma$ has no fixed points nor invariant lines,
because every fixed point of $\alpha$ is not fixed by $\beta$, and
every $\alpha$-invariant complex line is not $\beta$-invariant.

(c) $\Lambda(\Gamma)= C(\Gamma) = \Flecha{e_1, e_2} \cup
\Flecha{e_2, e_3} \cup \Flecha{e_3, e_1}$.

The Lemma \ref{3lineslema} implies that $C(\Gamma) \cap
\Lambda(\Gamma)$ contains three complex lines in general position,
then Theorem \ref{main2} implies that $\Lambda(\Gamma)= C(\Gamma)$.
It is not hard to check that $\Lambda(\gamma)= \Lambda(\gamma ^3)$,
and $\gamma ^3$ is given by a diagonal matrix, it follows that
$\Lambda(\gamma ^3) \subset \Flecha{e_1, e_2} \cup \Flecha{e_2, e_3}
\cup \Flecha{e_3, e_1}$. Therefore $C(\Gamma) \subset \Flecha{e_1,
e_2} \cup \Flecha{e_2, e_3} \cup \Flecha{e_3, e_1}$, and it is not
hard to see that $\Flecha{e_1, e_2} \cup \Flecha{e_2, e_3} \cup
\Flecha{e_3, e_1} \subset C(\Gamma)$.
\end{proof}

Inspired by the last example we state the following Lemma.

\begin{lemma} \label{4lineslema}
If $\Gamma \subset PSL(3, \C)$ is an infinite discrete group without
fixed points nor invariant lines, and $C(\Gamma)$ contains more than
three complex lines (not necessarily in general position) then $
C(\Gamma) \cap \Lambda(\Gamma)$ has at least \emph{four} complex
lines in general position.
\end{lemma}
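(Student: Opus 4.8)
The plan is to argue by contradiction, after first recording the structure already available. By Lemma \ref{3lineslema} the set $C(\Gamma)\cap\Lambda(\Gamma)$ contains three lines $l_1,l_2,l_3$ in general position, so by Lemma \ref{CsubLambda} and Theorem \ref{main2} we have $C(\Gamma)=\Lambda(\Gamma)$, and this set is a union of complex lines. Write $\mathcal{L}$ for the family of all complex lines contained in $\Lambda(\Gamma)$. Since $\Lambda(\Gamma)$ is closed and $\Gamma$-invariant and $\gamma$ carries a line in $\Lambda(\Gamma)$ to a line in $\Lambda(\Gamma)$, the family $\mathcal{L}$ is $\Gamma$-invariant; and the hypothesis that $C(\Gamma)$ contains more than three complex lines says that $\mathcal{L}$ has at least four members. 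The goal becomes to produce four lines of $\mathcal{L}$ in general position, so I assume no such four exist and seek a contradiction.

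Next I would extract the combinatorial meaning of this assumption. Let $A=l_2\cap l_3$, $B=l_1\cap l_3$, $C=l_1\cap l_2$ be the three vertices of the triangle spanned by $l_1,l_2,l_3$; they are non-collinear, since collinearity would force $l_1,l_2,l_3$ to be concurrent. For any line $l\in\mathcal{L}\setminus\{l_1,l_2,l_3\}$ the quadruple $\{l_1,l_2,l_3,l\}$ is, by assumption, not in general position, and because $l_1,l_2,l_3$ are not concurrent this forces $l$ to pass through one of $A,B,C$. Thus every line of $\mathcal{L}$ passes through a vertex of the triangle. Note also that a line through two distinct vertices is one of $l_1,l_2,l_3$ (for instance $l_1$ is the line through $B$ and $C$), so every line of $\mathcal{L}$ other than $l_1,l_2,l_3$ passes through exactly one vertex.

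I would then split into two cases according to how the extra lines distribute over the vertices. Case (I): two distinct vertices, say $A$ and $B$, each carry an extra line, that is, there are $m\in\mathcal{L}$ through $A$ only and $n\in\mathcal{L}$ through $B$ only. I claim $\{m,n,l_1,l_2\}$ is in general position. Each of $A,B,C$ lies on exactly two of these four lines ($A$ on $m,l_2$; $B$ on $n,l_1$; $C$ on $l_1,l_2$), and there is no concurrence elsewhere: $n$ and $l_1$ already meet only at $B$ while $m$ does not pass through $B$, $m$ and $l_2$ meet only at $A$ while $n$ does not pass through $A$, and $l_1,l_2$ meet only at $C$ through which neither $m$ nor $n$ passes. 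This contradicts the assumption. Case (II): all extra lines pass through a single vertex, say $A$ (this must hold when Case (I) fails, and at least one extra line exists because $\mathcal{L}$ has at least four lines). Then every line of $\mathcal{L}$ other than $l_1$ passes through $A$, and any point $q\ne A$ lies on at most two lines of $\mathcal{L}$ (it determines a single line of the pencil through $A$, together with possibly $l_1$). Hence $A$ is the \emph{unique} point of $\P^2$ lying on at least three lines of $\mathcal{L}$; since $\Gamma$ permutes $\mathcal{L}$ preserving incidences, $\Gamma$ fixes $A$, contradicting the absence of global fixed points.

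The step needing the most care is precisely the passage from ``no four lines in general position'' to this rigid dichotomy (which is the dual of the classical near-pencil classification). Concretely, the heart is the explicit verification in Case (I): a naive choice of four lines — for example one reusing a line through two of the vertices — may fail to be in general position even though the good choice $\{m,n,l_1,l_2\}$ succeeds, so the incidence bookkeeping must be done carefully. The other delicate point is to characterise the heavy vertex $A$ in Case (II) in a manifestly $\Gamma$-invariant fashion (as the unique point on three or more lines of $\mathcal{L}$), so that the no-fixed-point hypothesis applies. With both cases dispatched, the contradiction is complete and $C(\Gamma)\cap\Lambda(\Gamma)$ must contain four lines in general position.
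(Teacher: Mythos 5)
Your proof is correct and follows essentially the same strategy as the paper's: assume no four lines in general position, deduce that every further line of $\Lambda(\Gamma)$ must pass through a vertex of the triangle formed by the three lines supplied by Lemma \ref{3lineslema}, and invoke the absence of global fixed points to rule out the case where all such lines concentrate at a single vertex. Your write-up is in fact more careful than the paper's own (which works with the $\Gamma$-orbit of the three lines and leaves both the existence of a fourth line in that orbit and the fixed-point step somewhat implicit), notably in the explicit incidence check that $\{m,n,l_1,l_2\}$ is in general position and in characterising the heavy vertex $\Gamma$-invariantly as the unique point lying on three or more lines of $\mathcal{L}$.
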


\begin{proof}
The proof of this Lemma is an extension of the proof of
\ref{3lineslema}, so there exist three complex lines $l_0, l_1, l_2$
(in general position), contained in the set $ C(\Gamma) \cap
\Lambda(\Gamma)$. Let $q_0$, $q_1$, $q_2$ be the intersection points
of the pairs of complex lines $l_0$ and $l_1$, $l_1$ and $l_2$,
$l_2$ and $l_0$, respectively. Now we proceed by contradiction, we
assume that there are not more than three lines (in general
position) contained in $C(\Gamma) \cap \Lambda(\Gamma)$, then for
every $\gamma \in \Gamma$, the complex lines $\gamma (l_j)$ for
$j=0,1,2$ necessarily pass through one of the points $q_0, q_1,
q_2$. We can suppose, without loss of generality, that there exists
one such line, $l_3$, distinct from $l_0$ and $l_1$, passing through
$q_0$. If there are no more lines of the form $\gamma (l_j),$
$j=0,1,2$, then $q_0$ is fixed by the whole group $\Gamma$ which
cannot happen. Therefore, there exists another complex line $l_4$
such that $l_4$ is equal to $\gamma (l_j)$ for some $\gamma \in
\Gamma$ and for some $j=0,1,2$,  
and $l_4$ passes through a point $q_1$ distinct from $q_0$ (because
otherwise $q_0$ is fixed by $\Gamma$), then $l_0, l_2, l_3, l_4$ are
in general position.
\end{proof}

\begin{lemma} \label{lineatractora} Let $\gamma \in PSL(3, \C)$ be any
element. Let us assume that $l_1$ is a complex line contained in
$\Lambda(\gamma)$, then for every complex line $l$ different from
$l_1$ (except, maybe, for a family of complex lines in a pencil of
complex lines), some of the sequences of \textbf{\emph{distinct}}
complex lines, $(\gamma ^n(l)) _{n \in \mathbb N}$ or $(\gamma
^{-n}(l))_{n \in \mathbb N}$ goes to $l_1$, as $n \to \infty$.
\end{lemma}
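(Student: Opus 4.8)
The plan is to transfer everything to the dual projective plane. Recall from the preliminaries that if $l$ is the line $[A:B:C]\in(\P^2)^*$ and $\tilde\gamma$ is a lift of $\gamma$, then $\gamma(l)=[(A,B,C)\tilde\gamma^{-1}]$; consequently the action of $\gamma$ on lines is itself a projective transformation $\gamma^\ast\in PSL(3,\C)$, namely the one with lift $(\tilde\gamma^{T})^{-1}$ acting on $(\P^2)^\ast\cong\P^2$. Iterating $\gamma$ on a line $l$ is thus the same as iterating the \emph{single} map $\gamma^\ast$ on the point $l$ of the dual plane, and $\gamma^n(l)\to l_1$ (respectively $\gamma^{-n}(l)\to l_1$) means exactly that the $\gamma^\ast$-orbit (respectively the $(\gamma^\ast)^{-1}$-orbit) of the point $l$ converges to the point $l_1$. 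So the lemma reduces to the elementary fact that, under a single projective map, almost every point is attracted forward or backward to the eigendirection of largest modulus, the exceptional locus being a proper projective subspace. Since $\Lambda(\gamma)$ contains a complex line, $\gamma$ is not elliptic of finite order (empty limit set), and infinite-order elliptics do not occur in the discrete groups under consideration; hence by the classification recalled above I may assume $\gamma$ is parabolic or loxodromic.

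The argument is then a case check over the Jordan forms listed in the preliminaries. If $\tilde\gamma$ has eigenvalues $\lambda_1,\lambda_2,\lambda_3$, then $\gamma^\ast$ has eigenvalues $\lambda_1^{-1},\lambda_2^{-1},\lambda_3^{-1}$, and the eigendirection of $\gamma^\ast$ for the eigenvalue of largest modulus (i.e.\ the $\lambda_j$ of smallest modulus) is, read as a dual coordinate, precisely the $\gamma$-invariant line spanned by the remaining two eigenvectors; this line is the forward attractor of $\gamma^\ast$. The key point is to verify that one of the lines belonging to $\Lambda(\gamma)$ is exactly this forward attractor, or else the backward attractor (the eigendirection of the $\lambda_j$ of largest modulus). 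For the strongly loxodromic form, the loxoparabolic form, and the case $|\lambda_1|=|\lambda_2|<|\lambda_3|$ the relevant convergences $\gamma^{\pm n}(l)\to l_1$ were already carried out inside the proof of Lemma \ref{CsubLambda}, and I would reuse them verbatim. The remaining cases (the two purely parabolic forms, the complex homothety, and the screw) are identical one-line computations with $\tilde\gamma^n$ in dual coordinates; in each case, taking the appropriate sign of $n$, the generic line converges to a line $l_1\subset\Lambda(\gamma)$.

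Finally I would identify the exceptional family. The lines $l$ for which $\gamma^{\pm n}(l)$ fails to tend to $l_1$ are exactly those whose dual coordinate in the dominant eigendirection vanishes, i.e.\ the lines $l$ passing through a distinguished fixed point $p_0$ of $\gamma$; these form a pencil, as required. In the degenerate situations the exceptional set is even smaller: for the single unipotent Jordan block (the regular parabolic form) every line other than $l_1$ is attracted to $l_1$, so the pencil is empty. One must also check that the sequences in the statement consist of \emph{distinct} lines: an equality $\gamma^n(l)=\gamma^m(l)$ with $n\ne m$ forces $l$ to be a periodic, hence $\gamma$-invariant, line, and there are only finitely many such lines, each of which either equals $l_1$ or already lies in the exceptional pencil.

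The main obstacle is the bookkeeping in the equal-modulus cases, namely the screw, the complex homothety, and the parabolic form with unimodular eigenvalues. There the two non-dominant dual eigenvalues share the same modulus, so orbits lying in that invariant subspace need not converge (they may merely rotate); I must confirm that this pathology touches only the excluded pencil and never a generic orbit. This is guaranteed because $l_1$ corresponds to the \emph{strictly} dominant eigendirection, whose coefficient is nonzero for every line outside the pencil, so the dominant term governs the limit regardless of the rotation among the lower-modulus coordinates.
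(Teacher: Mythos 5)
Your proposal is correct and follows essentially the same route as the paper: pass to the dual plane, where $\gamma$ acts by the projective transformation with lift $(\tilde\gamma^{T})^{-1}$, and run a case check over the Jordan canonical forms to see that the generic dual orbit converges, forward or backward, to the dual point of $l_1$, the exceptional locus being a pencil through a fixed point. The only quibble is your distinctness argument (periodic does not imply $\gamma$-invariant, and the periodic lines need not be finite in number, e.g.\ for a screw with $\lambda/\mu$ a root of unity), but this is harmless since a periodic orbit of lines cannot converge to $l_1\ne l$ in any case.
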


\begin{proof}
We split the proof in several cases according to the classification
of the elements in $PSL(3, \C)$ given in \cite{Na2}.

(a) In the first case, we can assume that $\gamma$ has a lift of the
form
\begin{displaymath}
\left(
\begin{array}{ccc}
1 & 1& 0 \\
0 & 1 & 0 \\
0 & 0 & 1
\end{array}
\right),
\end{displaymath}
then $\Lambda(\gamma)$ is the complex line $\Flecha{e_1, e_3}$,
which is represented, in $(\P^2 )^*$, by $[0:1:0]$. If $l$ is a
complex line represented in $(\P^2)^*$ by $[A:B:C]$, then $\gamma ^n
(l), n \in \mathbb N$ is represented in $(\P^2)^*$ by
\begin{displaymath}
[A(n): B(n): C(n)]:= [(A, B, C)\left(
\begin{array}{ccc}
1 & -n & 0 \\
0 & 1 & 0 \\
0 & 0 & 1
\end{array}
\right) ] = [A: -nA+B: C].
\end{displaymath}

We obtain that $[A(n): B(n): C(n)] \underset{n \to
\infty}{\longrightarrow} [0:1:0]$, whenever $A \ne 0$, which implies
that $\gamma ^n(l) \underset{n \to \infty}{\longrightarrow}
\Flecha{e_1, e_3}$, whenever $l$ is a complex line not passing
through the point $[1:0:0]\in \P^2$.

b) We can assume that $\gamma$ has a lift of the form:
\begin{displaymath}
\left(
\begin{array}{ccc}
\zeta & 1& 0 \\
0 & \zeta & 0 \\
0 & 0 & \zeta ^{-2}
\end{array}
\right), \quad |\zeta| =1,
\end{displaymath}
then $\Lambda(\gamma)$ is the complex line $\Flecha{e_1, e_3}$ which
is represented, in $(\P^2 )^*$, by $[0:1:0]$. If $l$ is a complex
line represented in $(\P^2 )^*$ by $[A:B:C]$, then $\gamma ^n (l), n
\in \mathbb N$ is represented in $(\P^2 )^*$ by
\begin{displaymath}
[A(n): B(n): C(n)]:= [(A, B, C)\left(
\begin{array}{ccc}
\zeta ^{-n} & -n \zeta ^{-(n+1)} & 0 \\
0 & \zeta ^{-n} & 0 \\
0 & 0 & \zeta^{2n}
\end{array}
\right)]
\end{displaymath}
Hence, $$[A(n): B(n): C(n)]= [A \zeta ^{-n}: -n \zeta ^{-(n+1)}A +
\zeta ^{-n}B:\zeta ^{2n} C ].$$

We obtain that $[A(n): B(n): C(n)] \underset{n \to
\infty}{\longrightarrow} [0:1:0]$, whenever $A \ne 0$, which means
that $\gamma ^n(l) \underset{n \to \infty}{\longrightarrow}
\Flecha{e_1, e_3}$, whenever $l$ is a complex line not passing
through the point $[1:0:0]\in \P^2$.

(c) We can assume that $\gamma$ has a lift of the form:
\begin{displaymath}
\left(
\begin{array}{ccc}
1 & 1& 0 \\
0 & 1 & 1 \\
0 & 0 & 1
\end{array}
\right).
\end{displaymath}
In this case, $\Lambda(\gamma)$ is the complex line $\Flecha{e_1,
e_2}$ which is represented, in $(\P^2)^*$, by $[0:0:1]$. If $l$ is a
complex line represented in $(\P^2)^*$ by $[A:B:C]$, then $\gamma ^n
(l), n \in \mathbb N$ is represented in $(\P^2)^*$ by
\begin{displaymath}
[A(n): B(n): C(n)]:= [(A, B, C)\left(
\begin{array}{ccc}
1 & -n & \frac{n(n+1)}{2} \\
0 & 1 & -n \\
0 & 0 & 1
\end{array}
\right)] .
\end{displaymath}
Hence, $$[A(n): B(n): C(n)]=[A:-nA+B: \frac{n(n+1)}{2}A-nB+C],$$and
it follows that $[A(n): B(n): C(n)] \underset{n \to
\infty}{\longrightarrow} [0:0:1]$, which means that $\gamma ^n(l)
\underset{n \to \infty}{\longrightarrow} \Flecha{e_1, e_2}$ for any
line $l$.

(d) We can assume $\gamma$ has a lift of the form
\begin{displaymath}
\left(
\begin{array}{ccc}
\zeta & 1& 0 \\
0 & \zeta & 0 \\
0 & 0 & \zeta ^{-2}
\end{array}
\right), \quad |\zeta| >1.
\end{displaymath}
In this case $\Lambda(\gamma)$ is equal to the union of the complex
lines $\Flecha{e_1, e_2}$ and $\Flecha{e_1, e_3}$ which are
represented in $(\P^2)^*$, by $[0:0:1]$ and $[0:1:0]$, respectively.
If $l$ is a complex line represented in $(\P^2)^*$ by $[A:B:C]$,
then $\gamma ^n (l), n \in \mathbb N$ is represented in $(\P^2)^*$
by
\begin{displaymath}
[A(n): B(n): C(n)]:= [(A, B, C)\left(
\begin{array}{ccc}
\zeta ^{-n} & -n \zeta ^{-(n+1)} & 0 \\
0 & \zeta ^{-n} & 0 \\
0 & 0 & \zeta^{2n}
\end{array}
\right) ]
\end{displaymath}
Hence,
\begin{eqnarray*}
[A(n): B(n): C(n)] & =  & [A \zeta ^{-n}: -n \zeta ^{-(n+1)}A +
\zeta ^{-n}B:\zeta ^{2n} C ] \\
& = & [A \zeta ^{-3n}: -n \zeta ^{-3n-1}A+ \zeta ^{-3n}B : C ].
\end{eqnarray*}
So that $[A(n): B(n): C(n)] \underset{n \to \infty}{\longrightarrow}
[0:0:1]$, whenever $C \ne 0$, which means that $\gamma ^n(l)
\underset{n \to \infty}{\longrightarrow} \Flecha{e_1, e_2}$,
whenever $l$ is a complex line not passing through the point
$[0:0:1]\in \P^2$.

Now, if $l$ is a complex line represented in $(\P^2)^*$ by
$[A:B:C]$, then $\gamma ^{-n} (l), n \in \mathbb N$ is represented
in $(\P^2)^*$ by
\begin{displaymath}
[A(-n): B(-n): C(-n)]:= [(A, B, C)\left(
\begin{array}{ccc}
\zeta ^{n} & n \zeta ^{n-1} & 0 \\
0 & \zeta ^{n} & 0 \\
0 & 0 & \zeta^{-2n}
\end{array}
\right) ]
\end{displaymath}
Hence,
\begin{eqnarray*}
[A(-n): B(-n): C(-n)] & =  & [\zeta ^{n}A : n \zeta ^{n-1}A +
\zeta ^{n}B: \zeta ^{-2n}C  ] \\
& = & [A : n \zeta ^{-1}A+ B : \zeta ^{-3n} C  ] \\
& = & [A /n : A \zeta ^{-1}+ B/n : \zeta ^{-3n} C /n].
\end{eqnarray*}
 Thus,  $[A(-n): B(-n):C(-n)] \underset{n \to \infty}{\longrightarrow} [0:1:0]$,
 whenever $A \ne 0$ or $B \ne 0$, which implies that $\gamma ^{-n}(l)
\underset{n \to \infty}{\longrightarrow} \Flecha{e_1, e_3}$,
whenever $l$ is a complex line different from $\Flecha{e_1, e_2}$.

The remaining cases are proved in a similar way.
\end{proof}

We define the set $\mathcal{E}(\Gamma)$ as the subset of $(\P^2 )^*$
consisting of all the complex lines $l$ for which there exists an
element $\gamma \in \Gamma$ such that $ l \subset \Lambda(\gamma)$.
Also we define $E(\Gamma)$ as the subset of $\P^2$ given by
$E(\Gamma) = \overline{\bigcup _{l \in \mathcal{E}(\Gamma)} l}$. It
is not hard to see that $E(\Gamma) \subset C(\Gamma)$ and
$E(\Gamma)=\bigcup _{l \in \overline{\mathcal{E}(\Gamma)}} l$.

\begin{remark}
If $\mathcal{E}(\Gamma)$ contains at least two distinct complex
lines, then
$$C(\Gamma) = E(\Gamma)=\overline{\bigcup _{l \in \mathcal{E}(\Gamma)} l}
=\bigcup _{l \in \overline{\mathcal{E}(\Gamma)}} l .$$
\end{remark}
\begin{proof}
We need only to consider the case when $\Lambda(\gamma)$ has the
form $l \cup\{p\}$, where $l$ is a complex line and $p$ is a point.
By hypothesis, there exists a complex line $l_1 \in
\mathcal{E}(\Gamma)$ such that $l_1 \ne l$. If $l_1$ passes through
$p$ then $p \in E (\Gamma)$, so we can assume that $l_1$ does not
pass through $p$ and it follows that one of the sequences of lines
$\gamma ^n (l_1)$ or $\gamma ^{-n}(l_1)$ goes to a complex line in
$\overline{\mathcal{E}(\Gamma)}$ passing through $p$, as $n \to
\infty$. Therefore $p \in E(\Gamma)$ and $C(\Gamma) \subset
E(\Gamma)$.
\end{proof}

\begin{lemma}\label{Esperfecto}
If $\Gamma \subset PSL(3, \C)$ is a discrete group and
$\mathcal{E}(\Gamma)$ contains at least four complex lines in
general position, then $\overline{\mathcal{E}(\Gamma)} \subset
(\P^2)^*$ is a perfect set. Hence, $C(\Gamma)$ is a non-numerable
union of complex lines.
\end{lemma}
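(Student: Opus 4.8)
The plan is to prove that $\overline{\mathcal{E}(\Gamma)}$ is closed with no isolated points (hence perfect) and then that it is uncountable, after which the final assertion about $C(\Gamma)$ follows from the Remark preceding this lemma. Closedness is immediate, since $\overline{\mathcal{E}(\Gamma)}$ is a closure, so the whole content lies in ruling out isolated points. I would first record that $\mathcal{E}(\Gamma)$ is $\Gamma$-invariant: if $l\subset\Lambda(\delta)$ with $\delta\in\Gamma$, then $\gamma(l)\subset\gamma(\Lambda(\delta))=\Lambda(\gamma\delta\gamma^{-1})$ for every $\gamma\in\Gamma$, so $\gamma(l)\in\mathcal{E}(\Gamma)$; in particular the whole orbit $(\gamma^{n}(l))_{n}$ lies in $\mathcal{E}(\Gamma)$ whenever $l\in\mathcal{E}(\Gamma)$.

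The main step is to show that no line of $\mathcal{E}(\Gamma)$ is isolated in $\overline{\mathcal{E}(\Gamma)}$. Fix $l_1\in\mathcal{E}(\Gamma)$, say $l_1\subset\Lambda(\gamma)$ with $\gamma\in\Gamma$. Lemma \ref{lineatractora} gives, for every line $l\neq l_1$ outside a fixed pencil (the lines through one point $p_0$), a sequence of \emph{distinct} lines $\gamma^{n}(l)$ or $\gamma^{-n}(l)$ converging to $l_1$. The point is to locate such an $l$ already inside $\mathcal{E}(\Gamma)$, and this is exactly where the hypothesis of four lines in general position is used in an essential way: by general position at most two of the four given lines pass through $p_0$, and deleting $l_1$ removes at most one further line, so at least one of the four lines, call it $l$, is in $\mathcal{E}(\Gamma)$, is distinct from $l_1$, and avoids the pencil. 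Then $(\gamma^{\pm n}(l))$ is a sequence of distinct elements of $\mathcal{E}(\Gamma)$ tending to $l_1$, so $l_1$ is an accumulation point of $\mathcal{E}(\Gamma)\subset\overline{\mathcal{E}(\Gamma)}$.

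Passing to the closure is then routine. For $l_\infty\in\overline{\mathcal{E}(\Gamma)}$, either $l_\infty\in\mathcal{E}(\Gamma)$, and the previous paragraph produces distinct lines of $\overline{\mathcal{E}(\Gamma)}$ converging to it, or $l_\infty$ is a limit of lines of $\mathcal{E}(\Gamma)$ all different from $l_\infty$; in either case $l_\infty$ is an accumulation point of $\overline{\mathcal{E}(\Gamma)}$. Thus $\overline{\mathcal{E}(\Gamma)}$ is perfect. Being a non-empty perfect subset of the compact metrizable space $(\P^2)^*$, it is uncountable; and since $\mathcal{E}(\Gamma)$ contains at least two distinct lines, the Remark preceding this lemma gives $C(\Gamma)=\bigcup_{l\in\overline{\mathcal{E}(\Gamma)}}l$, a union of uncountably many distinct complex lines.

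I expect the only real obstacle to be the counting in the main step: confirming that four lines in general position always leave one line available after removing $l_1$ and the exceptional pencil of Lemma \ref{lineatractora} (three lines would not suffice, which explains the threshold), together with the bookkeeping that the produced sequence genuinely consists of distinct members of $\mathcal{E}(\Gamma)$ — guaranteed by the distinctness clause of that lemma and the $\Gamma$-invariance noted above.
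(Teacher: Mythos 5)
Your proposal is correct and follows essentially the same route as the paper: reduce to showing each line of $\mathcal{E}(\Gamma)$ is an accumulation line of $\mathcal{E}(\Gamma)$, then apply Lemma \ref{lineatractora} together with the four-lines-in-general-position hypothesis to find a line whose $\gamma^{\pm n}$-orbit converges to $l_1$ through distinct members of $\mathcal{E}(\Gamma)$. You merely make explicit two points the paper leaves implicit, namely the $\Gamma$-invariance of $\mathcal{E}(\Gamma)$ and the count showing that the exceptional pencil plus $l_1$ cannot exhaust four lines in general position.
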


\begin{proof}
It suffices to prove that that each complex line in
$\overline{\mathcal{E}(\Gamma)}$ is an accumulation line of lines
lying in $\mathcal{E}(\Gamma)$. Furthermore, it is sufficient to
prove that each complex line in $\mathcal{E}(\Gamma)$ is an
accumulation line of lines lying in $\mathcal{E}(\Gamma)$. Let $l_1$
be a complex line in $\mathcal{E}(\Gamma)$, then there exists
$\gamma \in \Gamma$ such that $l_1 \subset \Lambda(\gamma)$. By the
Lemma \ref{lineatractora} and given that $\mathcal{E}(\Gamma)$
contains at least four lines in general position, we have that there
exists a line $l$ in $\mathcal{E}(\Gamma)$ such that some of the
sequences of distinct lines in $\mathcal{E}(\Gamma)$, $(\gamma
^{n}(l)) _{n \in \mathbb N }$ or $(\gamma ^{-n}(l)) _{n \in \mathbb
N }$ goes to $l_1$ as $n \to \infty$.
\end{proof}

\begin{proof}[Proof of Theorem \ref{main3}]

a) By Lemma \ref{3lineslema}, the set $\Lambda(\Gamma) \cap
C(\Gamma)$ contains at least three complex lines in general
position, then by Theorem \ref{eqeqdis} we have that
$Eq(\Gamma)=\Omega(\Gamma)$.

If $U \subset \P^2$ is a $\Gamma$-invariant open set on which
$\Gamma$ acts properly and discontinuously, then, by  Theorem
\ref{polloteo}, there exists a complex line $l$ contained in $\P^2
\setminus U$. Given that $\Gamma$ does not have invariant lines nor
fixed points, then $\P^2 \setminus U$ contains at least three
complex lines in general
position. 
Hence, by Theorem \ref{dissubeq}, $U \subset
Eq(\Gamma)=\Omega(\Gamma)$. Therefore $\Omega(\Gamma)$ is the
maximal open set on which $\Gamma$ acts properly and
discontinuously. The proof that  every connected component of
$\Omega(\Gamma)$ is complete Kobayashi hyperbolic is obtained
imitating the proof of Lemma 2.3 in \cite{BN}, and of the main
Theorem in \cite{BN}.

(b) The equality $\Lambda(\Gamma)= C (\Gamma)$ follows from the
Lemma \ref{3lineslema} and the Theorem \ref{main2}. Conexity follows
from the facts that two distinct complex lines always intersect and
a complex line is path-connected.

(c) The Lemma \ref{Esperfecto} implies that
$\overline{\mathcal{E}(\Gamma)}$ is a perfect set. Now, if
$\mathcal{D} \subset (\P^2)^*$ is a non-empty, closed
$\Gamma$-invariant subset, then there is a complex line $l \in
\mathcal{D}$ and the set $\{ \gamma (l) \, | \, \gamma \in \Gamma
\}$ contains at least three complex lines in general position. It
cannot happen that $\{ \gamma (l) \, | \, \gamma \in \Gamma \}$
contains only three complex lines because in such case
$\Lambda(\Gamma)$ would consist of only three complex lines, a
contradiction. Therefore, there are more than three complex lines in
the set $\{ \gamma (l) \, | \, \gamma \in \Gamma \}$ and imitating
the proof of the Lemma \ref{4lineslema}, we obtain that $\{ \gamma
(l) \, | \, \gamma \in \Gamma \}$ contains four lines in general
position. Therefore, $\mathcal{D}$ contains four lines in general
position. Let $\gamma$ be an element in $\Gamma$, applying the lemma
\ref{lineatractora} we deduce that every complex line contained in
$\Lambda(\gamma)$ is contained in $\mathcal{D}$, then
$\overline{\mathcal{E}(\Gamma)} \subset \mathcal{D}$.
\end{proof}

We would like to thank Professor Jos\'e Seade for enlightening
discussions during the process of this work and the kind hospitality
received by the three authors at IMATE-UNAM -Cuernavaca, and the
Facultad de Matem\'aticas, UADY. Also, during this time, the second
author was in a postdoctoral year at IMPA, and he is grateful to
this institution and its people, for their support and hospitality.

\end{document}